\title[Regret Analysis of Online LQR Control]{Regret Analysis of Online LQR Control via Trajectory Prediction and Tracking: Extended Version}
\newtheorem{assumption}{Assumption}
\newtheorem{problem}{Problem}
\newtheorem{prop}[theorem]{Proposition}
\DeclareMathOperator*\uplim{\overline{lim}}
\DeclareMathOperator*{\E}{\mathbf{E}}
\DeclareMathOperator*{\argmin}{argmin}
\author{%
 \Name{Yitian Chen$^*$} \Email{yitian.chen@anu.edu.au}\\
 \Name{Timothy L.\ Molloy$^*$} \Email{timothy.molloy@anu.edu.au}\\
 \Name{Tyler Summers$^\dag$} \Email{tyler.summers@utdallas.edu}\\
 \Name{Iman Shames$^*$} \Email{iman.shames@anu.edu.au}\\
 \addr $^*$CIICADA Lab, The Australian National University \quad\quad $^\dag$The University of Texas at Dallas%
}
\begin{document}

\maketitle

\begin{abstract}%
 In this paper, we propose and analyze a new method for online linear quadratic regulator (LQR) control with a priori unknown time-varying cost matrices. The cost matrices are revealed sequentially with the potential for future values to be previewed over a short window. Our novel method involves using the available cost matrices to predict the optimal trajectory, and a tracking controller to drive the system towards it. We adopted the notion of dynamic regret to measure the performance of this proposed  online LQR control method, with our main result being that the (dynamic) regret of our method is upper bounded by a constant. Moreover, the regret upper bound decays exponentially with the preview window length, and is extendable to systems with disturbances. We show in simulations that our proposed method offers improved performance compared to other previously proposed online LQR methods.
\end{abstract}

\begin{keywords}%
  Online LQR, Dynamic Regret, Trajectory tracking.
\end{keywords}

\section{Introduction}
\indent Optimal control problems arise in many fields such as econometrics \citep{bjork_time-inconsistent_2021,radneantu_making_2009}, robotics \citep{hampsey_exploiting_2022,renganathan_towards_2020}, physics \citep{liu_nonlinear_2021} and machine learning \citep{westenbroek_feedback_2020}. The Linear Quadratic Regulator (LQR) problem is the archetypal optimal control problem with vector-valued states and controls, and is reviewed in the following. Consider a controllable linear time-invariant system
\begin{equation}\label{eq:linsys}
    x_{t+1} = Ax_{t} + Bu_{t} + w_{t},
\end{equation}
where $t$ is a nonegative integer, $m$ and $n$ are positive integers, $A \in \mathbb{R}^{n\times n}$, $B \in \mathbb{R}^{n\times m}$, $x_t,w_{t}\in\mathbb{R}^n$, and $x_{0} =\bar{x}_{0}$ for some given $\bar{x}_0 \in \mathbb{R}^{n}$, and $u_{t} \in \mathbb{R}^{m}$. 
For a given finite time horizon $T \geq 2$ and initial condition $\bar{x}_{0}$, the control decisions $\{u_{t}\}_{t=0}^{T-2}$ are computed to minimize the quadratic cost function
\begin{align}
    J_{T}(\{x_{t}\}_{t=0}^{T-1},\{u_{t}\}_{t=0}^{T-2}) := \sum_{t=0}^{T-2} x_{t}^{\mathsf{T}}Q_{t}x_{t} + u_{t}^{\mathsf{T}}R_{t}u_{t} + x_{T-1}^{\mathsf{T}}Q_{T-1}x_{T-1}, \label{eq:cost}
\end{align}
where $Q_{t} \in \mathbb{S}^{n}_{+}$ and $R_{t} \in \mathbb{S}^{m}_{++}$ are time-varying cost matrices and $\mathbb{S}^{n}_{+}$ and $\mathbb{S}^{n}_{++}$ denote the sets of positive semi-definite symmetric and positive definite symmetric matrices, respectively. 
The states $x_{t}$ and controls $u_{t}$ minimizing \eqref{eq:cost} must satisfy \eqref{eq:linsys}.
When the cost matrices $\{Q_t\}_{t=0}^{T-1}$ and $\{R_t\}_{t = 0}^{T-2}$ are known \emph{a priori}, the controls minimizing \eqref{eq:cost} subject to \eqref{eq:linsys} can be found in closed form, cf.\ \citep[Chapter 2]{anderson_optimal_2007}.
However, in many real word applications, such as power systems \citep{kouro_model_2009}, chemistry \citep{chen_distributed_2012} and mechatronics \citep{vukov_real-time_2015}, full information about the cost matrices over the whole time horizon is not available (in advance) to the decision maker.\\
\indent In our work, for a given time horizon $T$ and preview window length $0 \leq W \leq T-2$, we suppose that at any time $t$ where $0 \leq t < T-2-W$, only the initial condition of the system \eqref{eq:linsys} and the (partial) sequences of cost matrices $\{Q_{i}\}_{i=0}^{t+W}$ and $\{R_{i}\}_{i=0}^{t+W}$ are known.
Let the cost-function information available to the decision maker at time $t$ be
\begin{equation}\label{eq:history}
    \mathcal{H}_{t} := \{\{Q_{i}\}_{i=0}^{t+W}, \{R_{i}\}_{i=0}^{t+W}, \bar{x}_{0}\},
\end{equation}
where $\mathcal{H}_{t}$ contains the full temporal information about the cost matrices for $t \geq T-2-W$. 
The main focus of our work is to propose a novel control policy that generates $u_{t}$ using the information available at time $t$, and investigate its performance.
We specifically consider a feedback control policy $\pi(\cdot,\cdot)$ of the form 
\begin{equation}\label{eq:policyForm}
    u_{t} = \pi(x_{t}, \mathcal{H}_{t}),
\end{equation}
and adopt the notion of regret to measure its performance.
% \begin{comment}
%   More precisely, at each time instance $t$, the known information are the initial condition of the system, the linear time-invariant system dynamics and the cost matrices in the horizon from $0$ to $t+W$.  
%   Motivated by the online optimization problem, the notion of regret is considered to measure the performance of control actions with partial information about the cost matrices.
% \end{comment}
 Several different notions of regret have been well studied and explored in the online optimization problem, including static regret \citep{zinkevich_online_2003,shalev-shwartz_online_2012}, dynamic regret \citep{jadbabaie_online_2015}. In our work, performance is measured by dynamic regret. For any control sequence $\{u_{t}\}_{t=0}^{T-2}$ and associated state sequence $\{x_{t}\}_{t=0}^{T-1}$, the dynamic regret is defined as
\begin{equation}\label{eq:regret}
    \text{Regret}_{T}(\{u_{t}\}_{t=0}^{T-2}) 
    := J_{T}(\{x_{t}\}_{t=0}^{T-1},\{u_{t}\}_{t=0}^{T-2}) - J_{T}(\{x_{t}^{*}\}_{t=0}^{T-1},\{u_{t}^{*}\}_{t=0}^{T-2}),
\end{equation}
where
\begin{equation}\label{eq:contseq}
    \{u_{t}^{*}\}_{t=0}^{T-2} := \argmin_{\{\upsilon_{i}\}_{i=0}^{T-2}} J_{T}(\{\xi_{i}\}_{i=0}^{T-2},\{\upsilon_{i}\}_{i=0}^{T-2}),
\end{equation}
and $\{x_{t}^{*}\}_{t=0}^{T-1}$ satisfy the system dynamics \eqref{eq:linsys} for input sequence $\{u_{t}^{*}\}_{t=0}^{T-2}$.

\subsection{Related Works}
Similar investigations have recently been conducted in \cite{cohen_online_2018}, \cite{zhang_regret_2021}, and \cite{akbari_logarithmic_2022}.
\cite{cohen_online_2018} and \cite{akbari_logarithmic_2022} consider a different notion of regret involving comparison with controls $\tilde{u}_t = -K\tilde{x}_t$ (instead of $u_t^*$) generated by a fixed gain $K$ from the set of $(\bar{\kappa}, \bar{\gamma})$ strongly stable gains denoted by $\mathcal{K}$. More precisely, $\mathcal{K}$ is the set of all gains where for any $K \in \mathcal{K}$, there exists matrices $L$ and $H$ such that $A+BK = HLH^{-1}$, with $\left \| L \right \| \leq 1-\bar{\gamma}$ and $\left \| H \right \|, \left \| H^{-1} \right \| \leq \bar{\kappa}$ for prescribed scalars $\bar{\kappa}$ and $\bar{\gamma}$\footnote{We shall use $\|\cdot\|$ to denote either the 2-norm of a vector or the spectral norm of a matrix, depending on its argument.}. For a sequence of controls $\{u_{t}\}_{t=0}^{T-1}$, the notion of regret for time horizon $T$ and controls $\{u_{t}\}_{t=0}^{T-1}$ from these works is
\begin{align}
    \text{StablizingRegret}_{T}(\{u_{t}\}_{t=0}^{T-2}) :=  J_{T}(\{x_{t}\}_{t=0}^{T-1},\{u_{t}\}_{t=0}^{T-2}) - J_{T}(\{\tilde{x}_{t}\}_{t=0}^{T-1},\{\tilde{K}\tilde{x}_{t}\}_{t=0}^{T-2}), \label{eq:oldreg}
\end{align}
where $\tilde{K} \in \argmin_{K\in \mathcal{K}} J_{T}(\{\tilde{x}_{t}\}_{t=0}^{T-1},\{K\tilde{x}_{t}\}_{t=0}^{T-2})$ and $\{\tilde{x}_{t}\}_{t=0}^{T-1}$ satisfies \eqref{eq:linsys}.

\cite{cohen_online_2018} propose an online LQR algorithm that yields controls with a theoretical regret upper bound of $\text{StablizingRegret}_{T}(\{u_{t}\}_{t=0}^{T-1})\leq O(\sqrt{T})$. 
However, the algorithm involves a computationally expensive projection step at each time $t$, and the projection set can become empty for some controllable systems when the covariance of the system disturbances $w_t$ is positive definite\footnote{For example, the set is empty, if $A=
    \begin{pmatrix}
    1 & 2\\
    6 & 9
    \end{pmatrix}$, 
    $B= \begin{pmatrix}
        9\\
        6
    \end{pmatrix}$, and the disturbances are distributed according to a multivariate Gaussian with mean zero and covariance matrix $I_2$.}.  Thus, this method is not applicable to all controllable linear time-invariant systems. Moreover, the theoretical stabilizing regret upper bound is proportional to the inverse of the cube of lower bound of covariance of system disturbances, i.e., $\text{StablizingRegret}_{T}({u_t}_{t=0}^{T-1}) = O(\frac{1}{\sigma^3})$, where the covariance of disturbances from $\eqref{eq:linsys}$ is lower bounded by $\sigma^2 I$. If $\sigma = 0$, the theoretical regret upper bound is undefined.
    \cite{akbari_logarithmic_2022} proposed an Online Riccati Update algorithm that obtains $\text{{StablizingRegret}}_{T}(\{u_{t}\}_{t=0}^{T-1}) = O(\sigma^2\log(T))$. The result avoids the undefined regret upper bound of \cite{cohen_online_2018} when the covariance matrix is not lower bounded by a positive $\sigma$. However, like \cite{cohen_online_2018}, the performance of the algorithm proposed in \cite{akbari_logarithmic_2022} is only guaranteed to achieve sublinear \emph{stabilizing regret} \eqref{eq:oldreg} against the best \emph{fixed} control gain $K$ from the set $\mathcal{K}$.
This notion of regret is not suitable for dynamic non-stationary environments. For example, a self-driving car may operate in different environments such as high-wind areas, or high and low-friction road surfaces. For the best performance to counter-act these environments, we need to use time-varying control gains and compare them against the best time-varying policies chosen in hindsight. 
 
\cite{zhang_regret_2021} investigate the dynamic regret \eqref{eq:regret} offered by an online LQR approach inspired by model predictive control. Future cost matrices and predicted disturbances are assumed to be available over a short future preview window of length $W \geq 0$, and the following assumption is made.
\begin{assumption}\label{assumption:cost}
    There exist symmetric positive definite matrices $Q_{min}, Q_{max}, R_{min}, R_{max}$ such that for time $0 \leq t \leq T-2$, 
\begin{equation}\label{eq:costbounds}
    \begin{split}
    0 \prec Q_{min} \preceq Q_{t} \preceq Q_{max},\\
        0 \prec R_{min} \preceq R_{t} \preceq R_{max},
    \end{split}
\end{equation}
where $F \prec G$ denotes $G - F$ being positive definite for symmetric matrices $F$ and $G$.
\end{assumption}

Under Assumption \ref{assumption:cost}, \cite{zhang_regret_2021} propose an online LQR algorithm for selecting controls $u_{t}$ at time $t$ by solving
\begin{align*}
    \min_{\{u_{k}\}_{k=t}^{t+W}} \sum_{k=t}^{t+W} x_{k}^{\mathsf{T}}Q_{k}x_{k} + u_{k}^{\mathsf{T}}R_{k}u_{k} + x_{t+W+1}^{\mathsf{T}}P_{max}x_{t+W+1}
\end{align*}
subject to \eqref{eq:linsys} where $P_{max}$ is the solution of the algebraic Riccati equation for the infinite-horizon LQR problem with cost matrices $Q_{max}$ and $R_{max}$.
The dynamic regret \eqref{eq:regret} of control sequences generated by this method is shown to be upper bound by a quantity that shrinks exponentially as the preview window length increases. 
However, the estimate of the tail cost at each time step (i.e., $x_{t+W+1}^{\mathsf{T}}P_{max}x_{t+W+1}$) can be too pessimistic due to its reliance on $P_{max}$ and the matrices $Q_{max}$ and $R_{max}$ from the bounds given in Assumption \ref{assumption:cost}. %\yitiansays{reasons of being pessimistic}

\subsection{Contributions}
The key contributions of this paper are:
\begin{itemize}
    \item The proposal of a method for solving the online LQR problem that is independent of the given upper or lower bounds on the cost matrices;
    \item Development of a regret bound for the disturbance-free case and proof that our proposed control policy yields sublinear regret;
    \item Provision of sufficient conditions under which our regret bound is less than that of the state-of-the-art methodology; and
    \item Analysis of our regret bound in the presence of disturbances.
\end{itemize}

% \paragraph*{\textbf{Notation.}}
% The symbol $\prec$ represents the relation that, if $F \prec G$, we have $G - F$ positive definite, and $F \preceq G$ implies that $G - F$ positive semi-definite. Similarly for $\succ$ and $\succeq$. The symbol $\|\cdot\|$ denotes the 2-norm of a vector or the spectral norm of a matrix depending on its argument. $\lambda_{min}(A)$ and $\lambda_{max}(A)$ denotes the minimum and maximum eigenvalue of matrix A, respectively.

\paragraph*{\textbf{Outline.}} The rest of the paper is organized as follows. In Section \ref{section:problemFormulation}, we state the online LQR problem that we consider.
In Section \ref{section:approach}, we introduce our proposed online LQR algorithm and bound its dynamic regret. In Section \ref{section:numerics}, we provide numerical results for the simulation of our proposed algorithm. Concluding remarks are presented in the last section.

\section{Problem Formulation}\label{section:problemFormulation}
In this paper, we consider the following problem.

% Consider the linear time-invariant system given by \eqref{eq:linsys}, for the system matrices $A$ and $B$, we make the following assumption.

% \begin{assumption}\label{assump:assumption1}
% The pair $(A,B)$ is controllable.
% \end{assumption}
\begin{problem}[Online LQR]\label{problem:OnlineLQR}  Consider the controllable system \eqref{eq:linsys}. Let the cost matrices in \eqref{eq:regret} satisfy Assumption \ref{assumption:cost} for any given $T \geq 2$ and $W < T-2$. At time $0 \leq t \leq T-W-2$, the available information to the decision maker is given by $\mathcal{H}_{t}$ as defined in \eqref{eq:history}. It is desired to design a control policy $\pi(\cdot, \cdot)$ of the form \eqref{eq:policyForm} that yields a regret, as defined by \eqref{eq:regret}, that is independent of the bounds given in Assumption \ref{assumption:cost}. Moreover, we seek to establish appropriate regret bounds for the following cases:
\begin{itemize}
    \item[a)] The case where $w_{t} = 0$ for $0\leq t\leq T-1$;
    \item[b)] The case where the disturbances $w_t$ for $0\leq t\leq T-1$ are independent and identically distribution (i.i.d.) random variables such that $\E(w_{t}) = 0$ and $\E(w_{t}w_{t}^{\mathsf{T}}) = W_{d}$ with $\E(\cdot)$ being the expectation operator and $W_{d} \in \mathbb{S}^{n}_{+}$. 
\end{itemize}

\end{problem}
Specifically, for part a) of Problem~\ref{problem:OnlineLQR} we show that the regret (as defined in \eqref{eq:regret}) associated with our proposed control policy is sublinear with respect to the time horizon $T$ for the case where $w_{t} = 0$ for $0\leq t\leq T-1$, i.e., 
\begin{equation}
    \text{Regret}_{T}(\{u_{t}\}_{t=0}^{T-2}) \leq o(T).
\end{equation}
For part b), we define the notion of ``expected regret'' as
\begin{align}\label{eq:expectedregret_def}
        \text{ExpectedRegret}_{T}(\{u_{t}\}_{t=0}^{T-2}) := \E(J_{T}(\{x_{t}\}_{t=0}^{T-1},\{u_{t}\}_{t=0}^{T-2}) - J_{T}(\{x_{t}^{*}\}_{t=0}^{T-1},\{u_{t}^{*}\}_{t=0}^{T-2})),
    \end{align}
and show that our proposed control policy yields controls that satisfy
$$ \text{ExpectedRegret}_{T}(\{u_{t}\}_{t=0}^{T-2}) \leq C_{ER} T \gamma^{2W} $$ for positive scalars $C_{ER}$ and $\gamma$\footnote{The exact definition of $\gamma$ will be presented in Theorem~\ref{theorem:mainResult}.}. In what follows we address this problem.

% \begin{remark}
% The sequence $\{u_{t}^{*}\}_{t=0}^{T-2}$ is the control sequence that provides global minimum solution of the cost function over the time horizon with full information of cost matrices. If any control sequence $\{u_{t}\}_{t=0}^{T-2}$ performs as well as the control sequence $\{u_{t}^{*}\}_{t=0}^{T-2}$ on average, we expect that the dynamic regret is upper bounded by a sublinear term with respect to $T$, i.e.,
% \begin{equation}\label{eq:5}
%     \text{Regret}_{T}(\{u_{t}\}_{t=0}^{T-2}) \leq o(T).
% \end{equation}
% \imansays{Why is this equation repeated?}
% This implies that, if the given time horizon is large enough, the limit superior of time average dynamic regret is zero.
% \end{remark}
% In the next section, we present our proposed algorithm and show that it achieves constant upperbound of the dynamic regret. When $w_{t}$ in \eqref{eq:linsys} are independent and identically distributed (i.i.d) random variables, the upperbound of expectation of dynamic regret for the control decisions that generated by the proposed algorithm will be examined in Section \ref{section:disturbance}.
% \section{Proposed Approach and Regret Bound}\label{section:approach}
% In this section, we present our main results including the proposed algorithm and its dynamic regret. The solution is comprised of a prediction step followed by a contraction as explain in Section \ref{section:stateContraction}.
\section{Approach and Regret Analysis}
\label{section:approach}
Our proposed online LQR approach involves first using the available information $\mathcal{H}_{t}$ at each time $t$ to predict the optimal state $x_{t+1}^{*}$ solving the full information LQR problem described in \eqref{eq:contseq}. We then select controls to track this prediction.
At time $0 \leq t \leq T-1$, we only know the information in $\mathcal{H}_{t}$. Let $x_{t+1\mid t+W}$ denote the estimate of the optimal state at time $t+1$ based on $\mathcal{H}_{t}$. We aim to track to the state $x_{t+1\mid t+W}$ at time $t+1$.
\paragraph{Prediction.} 
At each time $t$, we plan an optimal trajectory starting from the initial state $\bar{x}_{0}$ using the known cost matrices up to time $t+W$ and setting all the future matrices to be equal to their known values for time $t+W$.
%\imansays[inline]{This notation is a mess, the description is unclear as well. What do you mean by ``$J_{t+W}(\cdot,\cdot)$ is given in \eqref{eq:cost}''? They are clearly not the same. Note that on one hand you are using $J_T$ then you use $J_{t+W}$ to mean something different. Just as a test you can see that as defined below things don't quite line up for $t=T-W$.}
Specifically, at time $t$ where $0\leq t < T-W$, define $J_{t+W}(\cdot,\cdot)$ as
%$J_{t+W}(\cdot,\cdot)$ is given in \eqref{eq:cost}, where
\begin{align}
J_{t+W}(\{\xi_{i}\}_{i=0}^{T-1},\{\upsilon_{i}\}_{i=0}^{T-2}) &:= \sum_{k=0}^{t+W} [\xi_{k}^{\mathsf{T}}Q_{k}\xi_{k} + \upsilon_{k}^{\mathsf{T}}R_{k}\upsilon_{k}]\notag \\& + \sum_{k=t+1+W}^{T-2}[\xi_{k}^{\mathsf{T}}Q_{t+W}\xi_{k} + \upsilon_{k}^{\mathsf{T}}R_{t+W}\upsilon_{k}] + \xi_{T-1}^{\mathsf{T}}Q_{t+W}\xi_{T-1},
\end{align}
and 
\begin{align}
J_{t+W}(\{\xi_{i}\}_{i=0}^{T-1},\{\upsilon_{i}\}_{i=0}^{T-2}) := J_{T}(\{\xi_{i}\}_{i=0}^{T-1},\{\upsilon_{i}\}_{i=0}^{T-2})
\end{align}
for $T-W \leq t \leq T-1$.%\imansays{This condition is not correct. did you mean $t+W\geq T$?}
% \begin{equation}\label{eq:costW}
% \begin{split}
%     &J_{t+W}(\{\xi_{i}\}_{i=0}^{T-1},\{\upsilon_{i}\}_{i=0}^{T-2}) \\&:= 
%     \begin{cases}
%         \sum_{k=0}^{t+W} [\xi_{k}^{\mathsf{T}}Q_{k}\xi_{k} + \upsilon_{k}^{\mathsf{T}}R_{k}\upsilon_{k}] + \sum_{k=t+1+W}^{T-2}[\xi_{k}^{\mathsf{T}}Q_{t+W}\xi_{k} +\\ \upsilon_{k}^{\mathsf{T}}R_{t+W}\upsilon_{k}] + \xi_{T-1}^{\mathsf{T}}Q_{t+W}\xi_{T-1} & \text{if $0\leq t < T-W$}\\
%         J_{T}(\{\xi_{i}\}_{i=0}^{T-1},\{\upsilon_{i}\}_{i=0}^{T-2}) & \text{if $t \geq T+W$}.
%     \end{cases}
% \end{split}    
% \end{equation}

Then, we find the predicted optimal control sequence for all $0\leq j\leq T-2$ by solving
%\imansays[inline]{the min operator below should be argmin. Note how i have defined the operator argmin}
\begin{equation}\label{eq:seq}
    \begin{split}
        \left ( \{x_{j\mid t+W}\}_{j=0}^{T-1},\{u_{j\mid t+W}\}_{j=0}^{T-2} \right ) = \argmin_{\left( \{\xi_{i}\}_{i=0}^{T-1},\{\upsilon_{i}\}_{i=0}^{T-2} \right )} & \quad J_{t+W}(\{\xi_{i}\}_{i=0}^{T-1},\{\upsilon_{i}\}_{i=0}^{T-2})\\
    \text{subject to}\quad&\quad  \xi_{i+1} = A\xi_{i}+B\upsilon_{i},\quad \xi_{0} = \bar{x}_{0}.
    \end{split}
\end{equation}
%\imansays[inline]{In the end you name your variables whatever you want, but didn't you think $\xi$ and $v$ are better suited to represent $x$ and $u$? I want to understand the inner workings of your mind. }
\paragraph{Prediction Tracking.}
We propose the following feedback control policy
\begin{align}\label{eq:policy}
    \pi(x_{t},\mathcal{H}_{t}) = K(x_{t}-x_{t\mid t+W}) + u_{t\mid t+W},
\end{align}
where $K\in \mathbb{R}^{m\times n}$ is a control matrix such that $\rho(A+BK) < 1$, and $\rho(\cdot)$ denotes the matrix spectral radius. Intuitively, such control matrix $K$ leads to contraction of the distance %\imansays{what?! You want to minimise the ghostly-ness of these object?}
between $x_{t+1}$ and $x_{t+1\mid t+W}$, respectively given by (\ref{eq:linsys}) and (\ref{eq:seq}).
\subsection{Regret Analysis for the Disturbance-free Case}
In the following theorem, we present the result for the case of Problem \ref{problem:OnlineLQR}a) that the control sequence generated by (\ref{eq:policy}) incurs a sublinear upper bound regret with respect to time horizon $T$. Here, with slight abuse of notation, for a sequence of matrices $\{\Sigma_{i}\}_{i=0}^{N}$, we define $\max_{0 \leq t \leq N} \Sigma_{t} := \{\Sigma_{\tau} \mid 0\leq \tau \leq N, \Sigma_{\tau}\succeq \Sigma_{k}\text{ for all }0 \leq k \leq N\}$ and $\min_{0 \leq t \leq N} \Sigma_{t} := \{\Sigma_{\tau} \mid 0\leq \tau \leq N, \Sigma_{\tau}\preceq \Sigma_{k}\text{ for all }0 \leq k \leq N\}$. This enables us to define \emph{cost matrix sequence extrema} as $\bar{R}_{max} := \max_{0 \leq t \leq T-2}R_{t}$, $\bar{Q}_{max} := \max_{0 \leq t \leq T-1}Q_{t}$, $\bar{R}_{min} := \min_{0 \leq t \leq T-2}R_{t}$, and $\bar{Q}_{min} := \min_{0 \leq t \leq T-1}Q_{t}$. For any matrix $\Gamma$, we further define $\lambda_{min}(\Gamma)$ as the minimum eigenvalue of $\Gamma$ and $\lambda_{max}(\Gamma)$ as the maximum eigenvalue of $\Gamma$. %\yitiansays{the max operator has defined here}
%\imansays[inline]{Note where the square brackets are placed in the latex command.}
\begin{theorem}[Main Result] \label{theorem:mainResult} Consider the linear system defined by \eqref{eq:linsys}. For a given time horizon $T \geq 2$ and preview window length $0 \leq W \leq T-2$. Suppose that at time $0 \leq t \leq T-2$ the control input $u_t$ is generated by policy $\pi(\cdot,\cdot)$ as given by (\ref{eq:policy}).
Under Assumption \ref{assumption:cost}, the regret defined by \eqref{eq:regret} satisfies
\begin{equation}\label{eq:regUpperbound}
    \begin{split}
        \text{Regret}_{T}(\{u_{t}\}_{t=0}^{T-2}) 
    &\leq \frac{10D\gamma^{2W}\|\bar{x}_{0}\|^2}{3}\bigg[(\alpha_{1}+\alpha_{2})(\frac{C^2C_{K}\gamma}{(\gamma-1)})^2\bigg(\gamma^2S_{T}(\eta^2\gamma^2)-2\gamma S_{T}(\eta^2\gamma)\\
    &+S_{T}(\eta^2))+\frac{10C_f^{2}}{3}((\frac{\eta\gamma}{q(q-\eta\gamma)}-\frac{\eta}{q(q-\eta)})^2S_{T}(q^2)\\
    &+\frac{(\eta\gamma)^2S_{T}(\eta^2\gamma^2)}{q^2(q-\eta\gamma)^2}+\frac{\eta^2S_{T}(\eta^2)}{q^2(q-\eta)^2})\bigg)+(C_KC^2)^2S_{T}(\eta^2)\bigg],
    \end{split}
\end{equation}
where $\bar{P}_{max}$ satisfies%\imansays{define the max operator over the psd cones.}
$$\bar{P}_{max} = \bar{Q}_{max} + A^{\mathsf{T}}\bar{P}_{max}A - A^{\mathsf{T}}\bar{P}_{max}B(\bar{R}_{max}+B^{\mathsf{T}}\bar{P}_{max}B)^{-1}B^{\mathsf{T}}\bar{P}_{max}A,$$ $D = \left\|\bar{R}_{max} + B^{\mathsf{T}}\bar{P}_{max}B\right\|$,
$C_{K} = \left\|(\bar{R}_{min}+B^{\mathsf{T}}\bar{Q}_{min}B)^{-1}\right\|^{2}\left\|\bar{R}_{max}B^{\mathsf{T}}\right\|\frac{\lambda_{max}^{2}(\bar{P}_{max})}{\lambda_{min}(\bar{Q}_{min})},$
$C = \frac{\lambda_{max}(\bar{P}_{max})}{\lambda_{min}(\bar{Q}_{min})}\label{eq:C}$,
$\eta = \sqrt{1-\frac{\lambda_{min}(\bar{Q}_{min})}{\lambda_{max}(\bar{P}_{max})}}$,
% $$\alpha = \max_{0\leq t \leq T-2}\max_{0 \leq i \leq t-1}\{\lambda_{max}(A^{\mathsf{T}}P_{i+1}^{*}A), \lambda_{max}(A^{\mathsf{T}}P_{i+1\mid t}A)\},$$ 
$\alpha = \max_{\substack{0 \leq i \leq t-1\\ 0\leq t \leq T-2}}\{\lambda_{max}(A^{\mathsf{T}}P_{i+1}^{*}A), \lambda_{max}(A^{\mathsf{T}}P_{i+1\mid t}A)\}$,
$\beta = \min_{0\leq t \leq T-2}\lambda_{min}(Q_{t})$,
$\gamma = \frac{\alpha}{\alpha+\beta}\label{eq:gamma}$,
$S_{T}(z) = \sum_{t=0}^{T-1} z^{t}$,
$\alpha_{1}=\max_{t} \left\|K_{t\mid t+W}-K\right\|^2$,
$\alpha_{2}=\max_{t} 2\left\|K_{t}^{*}-K\right\|^2$,
$C_f = \max_{n \geq 0} \frac{\left\|(A+BK)^{n}\right\|}{(q+\varepsilon)^n}$,
$q = \rho(A+BK) + \varepsilon$,
and $0 \leq \varepsilon < 1-\rho(A+BK)$.
\end{theorem}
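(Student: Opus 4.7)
The plan is to decompose the regret into a \emph{prediction} error and a \emph{tracking} error. Using the quadratic identity $a^{\mathsf{T}} M a - b^{\mathsf{T}} M b = (a-b)^{\mathsf{T}} M (a-b) + 2 (a-b)^{\mathsf{T}} M b$ applied term-by-term to \eqref{eq:cost}, the regret \eqref{eq:regret} is upper bounded by a sum over $t$ of quantities involving $\|x_t - x_t^*\|^2$, $\|u_t - u_t^*\|^2$ and the cross terms $\|x_t - x_t^*\|\|x_t^*\|$, $\|u_t - u_t^*\|\|u_t^*\|$, each weighted by a norm of the corresponding cost matrix. Assumption~\ref{assumption:cost} together with the solution $\bar{P}_{max}$ of the displayed algebraic Riccati equation lets me collect these weights into the uniform constant $D$. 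The triangle inequality $\|x_t - x_t^*\| \leq \|x_t - x_{t\mid t+W}\| + \|x_{t\mid t+W} - x_t^*\|$ (and its control-sequence counterpart) then reduces the problem to bounding a prediction error between the clairvoyant trajectory $\{x_t^*, u_t^*\}$ from \eqref{eq:contseq} and the predicted optimal trajectory $\{x_{t\mid t+W}, u_{t\mid t+W}\}$ from \eqref{eq:seq}, and a tracking error between the realized trajectory generated by policy \eqref{eq:policy} and the predicted one.

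For the prediction error I would exploit the fact that both $x_t^*$ and $x_{t\mid t+W}$ arise from finite-horizon LQR problems over $[0,T-1]$ with common dynamics \eqref{eq:linsys} and initial state $\bar{x}_0$, but with different tail cost matrices (the predicted problem freezes $Q_{t+W}, R_{t+W}$ after time $t+W$). Under Assumption~\ref{assumption:cost}, the backward Riccati recursion governing both problems is contractive with rate at most $\gamma = \alpha/(\alpha+\beta)$, so a backward induction in time yields $\|P_{j\mid t+W} - P_j^*\| \leq C_P \gamma^{W}$ uniformly in $j \leq t+W$ (for a finite constant $C_P$), and the feedback gain discrepancy $\|K_{j\mid t+W} - K_j^*\|$ admits an analogous bound. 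Propagating these closed-loop gains forward from the shared initial condition $\bar{x}_0$ and tracking the resulting state and control differences produces the prediction-error bounds from which the factor $\gamma^{2W}$ (after squaring, since the regret is quadratic) and the contraction rate $\eta$ with its associated geometric sums $S_T(\eta^2)$, $S_T(\eta^2\gamma)$ and $S_T(\eta^2\gamma^2)$ naturally emerge; the constants $\alpha_1, \alpha_2, C, C_K$ originate from uniform norm bounds on the gains $K$, $K_t^*$, $K_{t\mid t+W}$ and on $\|x_t^*\|$.

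For the tracking error, subtracting the predicted dynamics from the closed-loop dynamics induced by \eqref{eq:policy} yields a recursion of the form $x_{t+1} - x_{t+1\mid t+1+W} = (A+BK)(x_t - x_{t\mid t+W}) + (x_{t+1\mid t+W} - x_{t+1\mid t+1+W})$, where the final term captures the discrepancy between two consecutive predicted trajectories. Since $\rho(A+BK) < 1$, the definition of $C_f$ and $q$ in the theorem statement gives $\|(A+BK)^n\| \leq C_f q^n$, and iterating the recursion from $t=0$ expresses the tracking error as a convolution of the inter-prediction discrepancies against a geometric kernel of rate $q$. Each inter-prediction discrepancy is again controlled by the Riccati contraction and so inherits a $\gamma^W$-type decay on top of an $\eta^t$-type decay in the horizon variable. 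The convolution of a $q^t$-kernel with an $\eta^t$-decay (and variants involving $\eta\gamma$) produces, via the elementary identity $\sum_{n=0}^{t} q^{t-n}\rho^n = (q^{t+1}-\rho^{t+1})/(q-\rho)$, exactly the mixed rational expressions $\eta\gamma/(q(q-\eta\gamma)) - \eta/(q(q-\eta))$ and the geometric-sum term $S_T(q^2)$ appearing in \eqref{eq:regUpperbound}.

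Assembling the two error bounds and substituting back into the regret decomposition of the first paragraph gives \eqref{eq:regUpperbound} after routine algebra. The main obstacle I anticipate is the Riccati-contraction step: establishing a \emph{uniform} exponential bound $\|P_{j\mid t+W} - P_j^*\| \leq C_P \gamma^W$ with contraction rate $\gamma = \alpha/(\alpha+\beta)$ requires using the lower bounds of Assumption~\ref{assumption:cost} to prevent degeneracy of the recursion, and then propagating it through $K = -(R+B^{\mathsf{T}}PB)^{-1}B^{\mathsf{T}}PA$ into a gain-error bound without losing a factor of $T$. A secondary bookkeeping obstacle is correctly separating the two distinct decay rates ($\eta$, from the closed-loop optimal policy, and $q$, from the tracking gain $K$) when they enter convolution sums jointly, so that each of the $S_T(\cdot)$ terms in \eqref{eq:regUpperbound} appears with the coefficient stated.
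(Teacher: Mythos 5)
Your overall architecture---a prediction error controlled by Riccati contraction at rate $\gamma$, and a tracking error governed by the recursion $x_{t+1}-x_{t+1\mid t+1+W}=(A+BK)(x_t-x_{t\mid t+W})+(x_{t+1\mid t+W}-x_{t+1\mid t+1+W})$ convolved against a geometric kernel of rate $q$---is exactly the paper's (Lemmas~\ref{thm:lemma2}--\ref{thm:lemma4}). The gap is in your very first step. If you expand the cost term-by-term via $a^{\mathsf{T}}Ma-b^{\mathsf{T}}Mb=(a-b)^{\mathsf{T}}M(a-b)+2(a-b)^{\mathsf{T}}Mb$ and then \emph{bound} the cross terms $2(x_t-x_t^*)^{\mathsf{T}}Q_tx_t^*$ and $2(u_t-u_t^*)^{\mathsf{T}}R_tu_t^*$ by Cauchy--Schwarz as you propose, you cannot reach \eqref{eq:regUpperbound}: each cross term is only \emph{linear} in the trajectory deviation, so summing gives at best a bound of order $\gamma^{W}$ (one factor of $\gamma^{W}$ from the deviation, none from $\|x_t^*\|$), whereas the theorem claims $\gamma^{2W}$. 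Absorbing the cross terms by AM--GM instead reintroduces $\sum_t\|x_t^*\|^2$, which has no $W$-dependence at all. The cross terms must be made to vanish exactly, not estimated.

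There are two ways to close this. (i) Note that both trajectories obey the same linear dynamics from the same $\bar{x}_0$, so $(x_t-x_t^*,u_t-u_t^*)$ is a feasible variation of the constrained quadratic program; first-order optimality of $\{x_t^*,u_t^*\}$ then kills the sum of all cross terms, leaving $\text{Regret}_T=\sum_t (x_t-x_t^*)^{\mathsf{T}}Q_t(x_t-x_t^*)+(u_t-u_t^*)^{\mathsf{T}}R_t(u_t-u_t^*)$, after which your program goes through (with slightly different constants). (ii) Do what the paper does and invoke the exact performance-difference identity of Lemma~\ref{thm:lemma5} (from \cite{zhang_regret_2021}), $\text{Regret}_T=\sum_{t}(u_t-K_t^*x_t)^{\mathsf{T}}(R_t+B^{\mathsf{T}}P_{t+1}^*B)(u_t-K_t^*x_t)$; this is what produces the specific constant $D=\|\bar{R}_{max}+B^{\mathsf{T}}\bar{P}_{max}B\|$ and the three-term split $u_t-K_t^*x_t=(K_{t\mid t+W}-K)(x_{t\mid t+W}-x_t^*)-(K_t^*-K)(x_t-x_t^*)+(K_t^*-K_{t\mid t+W})x_t^*$, each factor of which carries a full $\gamma^{W}$ (Lemma~\ref{thm:lemma2} for the last term, Lemma~\ref{thm:lemma4} for the first two), whence $\gamma^{2W}$ after squaring and the factor $10/3$ from Lemma~\ref{thm:lemmaineq}. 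A minor secondary point: your claimed uniform bound $\|P_{j\mid t+W}-P_j^*\|\le C_P\gamma^{W}$ for all $j\le t+W$ is not what the contraction yields (the exponent is $t+W+1-j$, which is small for $j$ near $t+W$); it does hold for all $j\le t$, which are the only indices the forward propagation actually uses, so this does not affect the argument.
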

\begin{proof}
    See Appendix \ref{sec:proof_main}.
\end{proof}
%In the next section we introduce the propositions and the lemmas that are required to prove the Theorem \ref{theorem:mainResult}.
% \subsection{Proof of Theorem \ref{theorem:mainResult}}
% We are now ready to prove Theorem \ref{theorem:mainResult}. Based on Assumption \ref{assumption:cost}, since the cost matrices are upper bounded, the regret upperbound is relying on the quadratic sum of $\|u_{t}-\bar{u}_{t}\|$ across the time horizon $T$.
%\imansays[inline]{the remak below is very confusing.}
\begin{remark}
For any $z\in[0,1)$ there exists an $\Lambda\in\mathbb{R}$, such that $\lim_{T\rightarrow \infty} S_T(z) = \Lambda$%\imansays{You are overloading $M$....see below.}
. Consequently,
    $\uplim_{T\rightarrow \infty}
    \frac{\text{Regret}_{T}(\{u_{t}\}_{t=0}^{T-2})}{T}
    = 0$, which implies that the control sequence described by (\ref{eq:policy}) yields sublinear regret.
\end{remark}
\begin{remark}
    Let $F(\bar{x}_{0},A,B,T,\bar{R}_{max},\bar{R}_{min},\bar{Q}_{max},\bar{Q}_{min},K)$ denote the right hand side (RHS) of  \eqref{eq:regUpperbound}. By stating almost identical lemmas to Lemmas \ref{thm:lemma2} and \ref{thm:lemma3} using the bounds given in Assumption \ref{assumption:cost} instead of the cost matrices sequence extrema values, one can arrive at a regret bound in terms of these bounds analogous to \eqref{eq:regUpperbound}:  
    %\imansays{You want to say something, long the lines that ``Note that $g$ is monotone increasing with respect to $R_{max}$ and $Q_{max}$ and decreasing with respect to $R_{min}$ and $Q_{min}$''. Thus,  (the inequality)} we have that
    \begin{equation*}
        \text{Regret}_{T}(\{u_{t}\}_{t=0}^{T-2}) \leq F(\bar{x}_{0},A,B,T,R_{max},R_{min},Q_{max},Q_{min},K).
    \end{equation*}
\end{remark}
In the following proposition, we state a condition in terms of the bounds given in Assumption \ref{assumption:cost} and the cost matrices sequence extrema where it is guaranteed that the bound given in the above theorem is smaller than that of \cite[Theorem 1, Equation (15)]{zhang_regret_2021}. Obviously, there might be other conditions, exploration of which is left to future work.
\begin{prop}\label{thm:sufficient}
    Adopt the hypothesis of Theorem \ref{theorem:mainResult}. If
    \begin{align}\label{eq:sufficientUpperbound}
        \lambda_{max}^{10}(Q_{max}) \geq \frac{5\left[(1+\frac{\alpha_{1}+\alpha_{2}}{(1-\gamma)^2})(\frac{1}{1-\eta^2})+\frac{10C_{f}^{2}}{q^{2}(q-\eta\gamma)^{2}(q-\eta)^{2}(1-\eta^{2})(1-\eta^{2}\gamma^{2})(1-q^2)}\right]}{(C_{K}^{2}\lambda^{2}_{min}(\bar{R}_{min})\lambda^{4}_{min}(\bar{Q}_{min}))^{-1}6\|A\|^2\|B\|^2\|B\bar{R}^{-1}_{min}B^{\mathsf{T}}\|^2},
    \end{align}
    where $Q_{max}$ is given in Assumption \ref{assumption:cost},
    then the RHS of inequality in \cite[Theorem 1, Equation (15)]{zhang_regret_2021} is greater than the RHS of inequality \eqref{eq:regret} in Theorem \ref{theorem:mainResult}.
\end{prop}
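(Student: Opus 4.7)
The plan is to establish that under the hypothesis, the right-hand side of \eqref{eq:regUpperbound} in Theorem~\ref{theorem:mainResult} is no larger than the right-hand side of \cite[Theorem 1, Equation (15)]{zhang_regret_2021}. The key conceptual observation is that our bound is expressed through the cost-matrix \emph{sequence extrema} $\bar{Q}_{max},\bar{Q}_{min},\bar{R}_{max},\bar{R}_{min}$ and the associated Riccati solution $\bar{P}_{max}$, whereas the Zhang bound is stated in terms of the looser \emph{a priori} bounds $Q_{max},R_{max}$ from Assumption~\ref{assumption:cost} and the corresponding Riccati solution $P_{max}$. Since $\bar{Q}_{max}\preceq Q_{max}$ and $\bar{R}_{max}\preceq R_{max}$, monotonicity of the discrete algebraic Riccati equation yields $\bar{P}_{max}\preceq P_{max}$, so slack in Assumption~\ref{assumption:cost} inflates the Zhang bound but not ours. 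This is the mechanism that makes \eqref{eq:sufficientUpperbound} possible.

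The first step is to replace each geometric sum $S_{T}(z)$ appearing in \eqref{eq:regUpperbound} by its closed-form upper bound $\frac{1}{1-z}$, which is valid because $\eta,\gamma,q,\eta\gamma\in[0,1)$ under the hypotheses of Theorem~\ref{theorem:mainResult}. After collecting like terms, our regret bound assumes the compact form $\|\bar{x}_{0}\|^{2}\gamma^{2W}\,M_{\text{ours}}$, where $M_{\text{ours}}$ is independent of $T$ and exhibits the bracketed factor of \eqref{eq:sufficientUpperbound} as a common group.

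The second step is to rewrite \cite[Equation (15)]{zhang_regret_2021} in the analogous form $\|\bar{x}_{0}\|^{2}\gamma^{2W}\,M_{\text{Z}}$. Using Assumption~\ref{assumption:cost} together with the identity $P_{max}\succeq Q_{max}$, which follows directly from the algebraic Riccati equation, I would lower-bound $M_{\text{Z}}$ by a polynomial in $\lambda_{max}(Q_{max})$. A careful counting of the powers of $P_{max}$ contributed by the tail-cost term $x^{\mathsf{T}}P_{max}x$, by the Riccati-based gain, and by the auxiliary constants in \cite{zhang_regret_2021} then yields a leading term proportional to $\lambda_{max}^{10}(Q_{max})$, multiplied by the factor $\frac{6\|A\|^{2}\|B\|^{2}\|B\bar{R}_{min}^{-1}B^{\mathsf{T}}\|^{2}}{C_{K}^{2}\lambda_{min}^{2}(\bar{R}_{min})\lambda_{min}^{4}(\bar{Q}_{min})}$ that appears in the denominator of \eqref{eq:sufficientUpperbound}.

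Finally, I would form the inequality $M_{\text{ours}}\leq M_{\text{Z}}$, cancel the common $\|\bar{x}_{0}\|^{2}\gamma^{2W}$, and isolate $\lambda_{max}^{10}(Q_{max})$, which produces exactly \eqref{eq:sufficientUpperbound}. The main obstacle is the second step: accurately tracking the polynomial dependence of the Zhang prefactor on $\lambda_{max}(Q_{max})$ so that the tenth power emerges from the bookkeeping rather than a higher or lower exponent. Once that counting is pinned down, the remaining estimates are routine algebraic rearrangements of already-simplified expressions.
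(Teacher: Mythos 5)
Your proposal follows essentially the same route as the paper: upper-bound the regret bound of Theorem~\ref{theorem:mainResult} by replacing each $S_T(z)$ with $\frac{1}{1-z}$, lower-bound the RHS of \cite[Theorem 1, Equation (15)]{zhang_regret_2021} using $P_{max}\succeq Q_{max}$ (together with $\gamma^W+\eta^W\geq\gamma^W$ and the replacement of $Q_{min},R_{min}$ by the sequence extrema), and then rearrange the comparison to isolate $\lambda_{max}^{10}(Q_{max})$. The ``bookkeeping'' you flag as the main obstacle is in fact immediate, since the factor $\lambda_{max}^{10}(P_{max})$ already appears explicitly in Zhang's prefactor, so the tenth power transfers to $\lambda_{max}^{10}(Q_{max})$ in one step.
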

\begin{proof}
    See Appendix \ref{section:proofSufficient}.
\end{proof}
The RHS of \eqref{eq:sufficientUpperbound} is independent of the matrices $Q_{min},Q_{max},R_{min},R_{max}$ given in Assumption \ref{assumption:cost}. On the other hand, the upper bound of regret for control decisions that generated by \cite[Algorithm 1]{zhang_regret_2021} does depend on these values and even if the actual sequence of the cost matrices remain bounded away from these bounds, the method still explicitly uses the bounds and this is a potential source of conservatism.% If the eigenvalue of $Q_{max}$ is sufficiently large, then the upper bound of regret for control decisions that generated by \cite[Algorithm 1]{zhang_regret_2021}  too conservative.
%\yitiansays{Sufficient condition for when our bound has beaten Lina's upperbound states in here}
%\imansays{This is still not formal.}
\subsection{Regret Analysis in the Presence of Disturbances}\label{section:disturbance}
The result presented in the following theorem address Problem \ref{problem:OnlineLQR} case b). Note that at time $t$, $\{w_{k}\}_{k=0}^{t}$ is the available sequence of disturbances to the decision maker. In this case, we still consider a policy $\pi(\cdot,\cdot)$ as given by (\ref{eq:policy}) with the only difference that $x_{t\mid t+W}$ is obtained by solving the following optimisation problem:
\begin{equation}\label{eq:seq_w}
    \begin{split}
        \left ( \{x_{j\mid t+W}\}_{j=0}^{T-1},\{u_{j\mid t+W}\}_{j=0}^{T-2} \right ) = \argmin_{\left( \{\xi_{i}\}_{i=0}^{T-1},\{\upsilon_{i}\}_{i=0}^{T-2} \right )} & \quad J_{t+W}(\{\xi_{i}\}_{i=0}^{T-1},\{\upsilon_{i}\}_{i=0}^{T-2})\\
    \text{subject to}\quad    &\quad \xi_{i+1} = A\xi_{i} + B\upsilon_{i} + w_{i} \quad \text{for $0 \leq i \leq t$},\\
    &\quad \xi_{i+1} = A\xi_{i} + B\upsilon_{i} \quad \text{for $i > t$} ,\quad \xi_{0} = \bar{x}_{0}.
    \end{split}
\end{equation}

\begin{theorem}\label{theorem:disturbanceResult}
    Consider the system defined by \eqref{eq:linsys}. For a given time horizon $T \geq 2$ and preview window length $0 \leq W \leq T-2$. Suppose that at time $0 \leq t \leq T-2$ the control input $u_t$ is generated by policy $\pi(\cdot,\cdot)$ as given by (\ref{eq:policy}).
Under Assumption \ref{assumption:cost}, the expected regret defined by \eqref{eq:expectedregret_def} satisfies
\begin{equation}
    \text{ExpectedRegret}_{T}(\{u_{t}\}_{t=0}^{T-2}) \leq C_{ER}T\gamma^{2W}
\end{equation}
where $C_{ER}$ is a positive scalar and $\gamma$ is given in Theorem \ref{theorem:mainResult}.
\end{theorem}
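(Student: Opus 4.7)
The approach is to adapt the deterministic regret analysis underlying Theorem~\ref{theorem:mainResult} to the stochastic setting. The key observation is that the deviation $e_t := x_t - x_t^*$ between our achieved trajectory and the clairvoyant-optimal trajectory satisfies the \emph{noise-free} recursion $e_{t+1} = Ae_t + B(u_t - u_t^*)$, because both trajectories are driven by the same realized disturbance sequence $\{w_i\}$ and hence the $w_t$ cancels identically. This reduces the regret to a deterministic quadratic form in prediction errors, after which the expectation is taken over the randomness living inside those prediction errors.

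My step-by-step plan is as follows. First, I would expand $J_T(\{x_t\},\{u_t\}) - J_T(\{x_t^*\},\{u_t^*\})$ and invoke the first-order optimality conditions for the clairvoyant problem (which is a deterministic LQR once the $w_t$ are realized) to kill the linear-in-deviation terms, exactly as in the proof of Theorem~\ref{theorem:mainResult}; this leaves a quadratic expression in $e_t$ and $\nu_t := u_t - u_t^*$. Second, using the policy definition \eqref{eq:policy}, I would decompose $\nu_t = (K - K_t^*)(x_t - x_t^*) + (K - K_{t\mid t+W})(x_{t\mid t+W} - x_t^*) + (u_{t\mid t+W} - u_t^*) + \text{(terms linear in } e_t\text{)}$, so that the regret is a quadratic form in the tracking error $e_t$ and the prediction gaps $x_{t\mid t+W} - x_t^*$, $u_{t\mid t+W} - u_t^*$. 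Third, I would bound the prediction gaps by adapting the Riccati-contraction arguments used to establish Theorem~\ref{theorem:mainResult} to the stochastic prediction problem \eqref{eq:seq_w}: since both the prediction and the clairvoyant optimum use the \emph{same} observed disturbances up to time $t$, the prediction gap at each future time depends only on the cost-matrix mismatch beyond the preview horizon, yielding the geometric $\gamma^{W}$ factor as in the noiseless analysis, but now with coefficients that are random through their dependence on past $w_i$. Fourth, I would take expectations: the crucial fact here is that $\rho(A+BK)<1$ combined with $\E[w_t]=0$, $\E[w_tw_t^{\mathsf{T}}]=W_d$, and Assumption~\ref{assumption:cost} guarantee that $\E\|x_t\|^2$, $\E\|x_t^*\|^2$ and $\E\|x_{t\mid t+W}\|^2$ are uniformly bounded in $t$ by a constant depending only on $(A,B,K,W_d,\bar{x}_0,\bar{Q}_{\max},\bar{R}_{\max})$. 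Finally, summing the per-timestep contributions over $t=0,\dots,T-2$ produces the $T$ factor, while each term retains the $\gamma^{2W}$ factor inherited from the prediction gap bound, yielding $\text{ExpectedRegret}_T \leq C_{ER}T\gamma^{2W}$ with a constant $C_{ER}$ independent of $T$.

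The main obstacle I anticipate is showing that the per-timestep bound is genuinely $T$-independent while still carrying the $\gamma^{2W}$ factor. Concretely, the prediction error $x_{t\mid t+W} - x_t^*$ and the gain differences $K_{t\mid t+W} - K_t^*$ must both be controlled uniformly in $t$, which requires a careful treatment of the cross-terms between the random state iterates and the deterministic Riccati-driven error recursions. Cross-terms of the form $\E[w_i^{\mathsf{T}}(\cdot)e_t]$ will vanish by independence and zero mean, but purely quadratic-in-disturbance terms do not vanish and must be absorbed into the uniform state-norm bounds; threading this boundedness through every term of the regret decomposition, without the constant $C_{ER}$ inadvertently picking up a factor of $T$ or losing its $\gamma^{2W}$ scaling, is the principal technical difficulty.
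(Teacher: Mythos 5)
Your proposal follows essentially the same route as the paper's proof: it reduces the regret to a quadratic form via the LQR performance-difference identity (Lemma~\ref{thm:lemma5}), decomposes $u_{t}-\bar{u}_{t}$ into gain-mismatch and prediction/tracking-gap terms controlled by the Riccati-contraction and exponential-stability lemmas, and then takes expectations using the i.i.d.\ zero-mean disturbances so that the deterministic part contributes an $O(\gamma^{2W})$ constant while the disturbance-driven part contributes $O(T\gamma^{2W})$. The paper organizes this as a superposition of a zero-initial-condition bound and the Theorem~\ref{theorem:mainResult} bound (Lemmas~\ref{thm:disBound1}--\ref{thm:disBound2}), which is precisely the deterministic-plus-stochastic split you describe, so the approach matches.
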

\begin{proof}
    See Appendix \ref{sec:proof_disturbance}.
\end{proof}
In the next section, we investigate the performance of the proposed algorithm for different scenarios.
\section{Numerical Simulations}\label{section:numerics}
%\yitiansays{titles of each graphs have been removed}
In this section, we numerically demonstrate the performance of the proposed algorithm. To this end, define $\Phi_{T,W} := \text{Regret}_{T}(\{u^{'}_{t}\}_{t=0}^{T-2}) - \text{Regret}_{T}(\{u_{t}\}_{t=0}^{T-2})$, where $\{u^{'}_{t}\}_{t=0}^{T-2}$ is generated from \cite[Algorithm 1]{zhang_regret_2021} and $\{u_{t}\}_{t=0}^{T-2}$ is generated by the policy described in \eqref{eq:policy}, under preview window length of $W$.
\subsection{Linearized Inverted Pendulum} \label{section:Linearized_Inverted_Pendulum} %don't put a space in your labels.
Consider the following linearized inverted pendulum system \cite[Chapter 2.13]{franklin_feedback_2020}:  %\imansays[inline]{where in Franklin? Ask Brian to tell you a story about G Franklin}
\begin{align}\label{eq:invertedPendulum}
    x_{t+1} =
    \begin{pmatrix}
    0 & 1 & 0 & 0\\
    0 & -0.1818 & 2.6727 & 0\\
    0 & 0 & 0 & 1\\
    0 & -18.1818 & 31.1818 & 0
    \end{pmatrix}
    x_{t}
    +
    \begin{pmatrix}
    0\\
    1.8182\\
    0\\
    4.5455
    \end{pmatrix}
    u_{t}.
\end{align}
In the following experiments, the preview horizon $W$ ranges from 0 to 19 and the time horizon $T$ ranges from 19 to 500. The cost matrices are chosen uniformly satisfying by Assumption \ref{assumption:cost} with $Q_{min} = 8\times10^{3}I_{4\times 4}$, $Q_{max} = 3.2\times10^{4}I_{4\times 4}$, $R_{min} = 2\times10^{3}$, and $R_{max} = 9.8\times10^{4}$. The fixed controller from \eqref{eq:policy} is chosen by placing the poles at the location of $(1,6,4,3)\times 10^{-3}$. We repeat the experiment in 200 trials. Figure \ref{fig:physSysReg} demonstrate $\Phi_{T,W}$, under preview window length from 0 to 19 and time horizon from 19 to 500. As the preview window length greater than 2, our method outperforms \cite[Algorithm 1]{zhang_regret_2021}.
\begin{figure}
\caption{Performance measure $\Phi_{T,W}$ for simulated systems.}
        \label{fig:different_cases}
     \centering
     \subfigure[$\Phi_{T,W}$ for disturbance-free linearized inverted pendulum system]{
         \includegraphics[width=.45\textwidth]{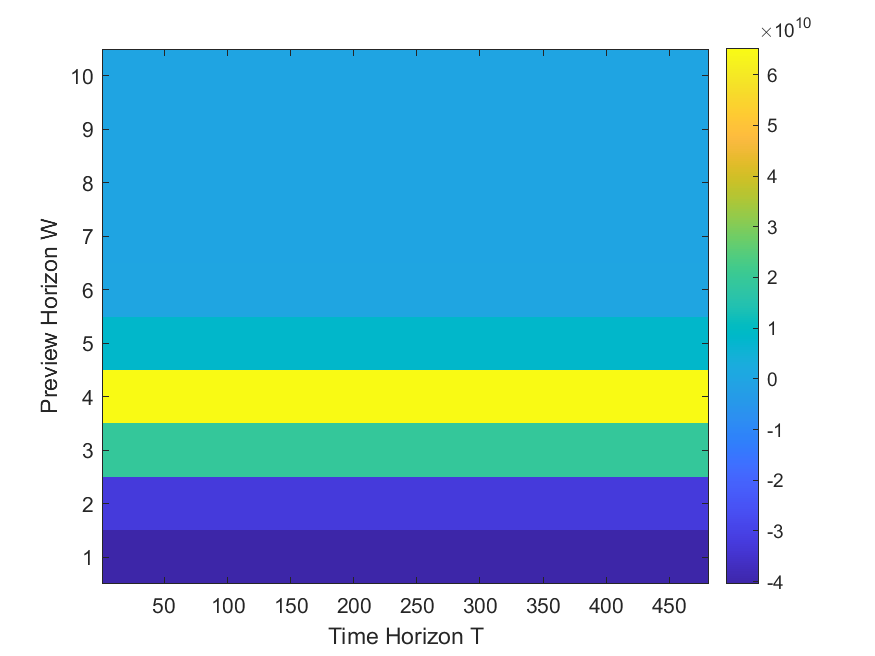}
    \label{fig:physSysReg}}
     \subfigure[$\Phi_{T,W}$ for disturbance-free random controllable systems]{
         \includegraphics[width=.45\textwidth]{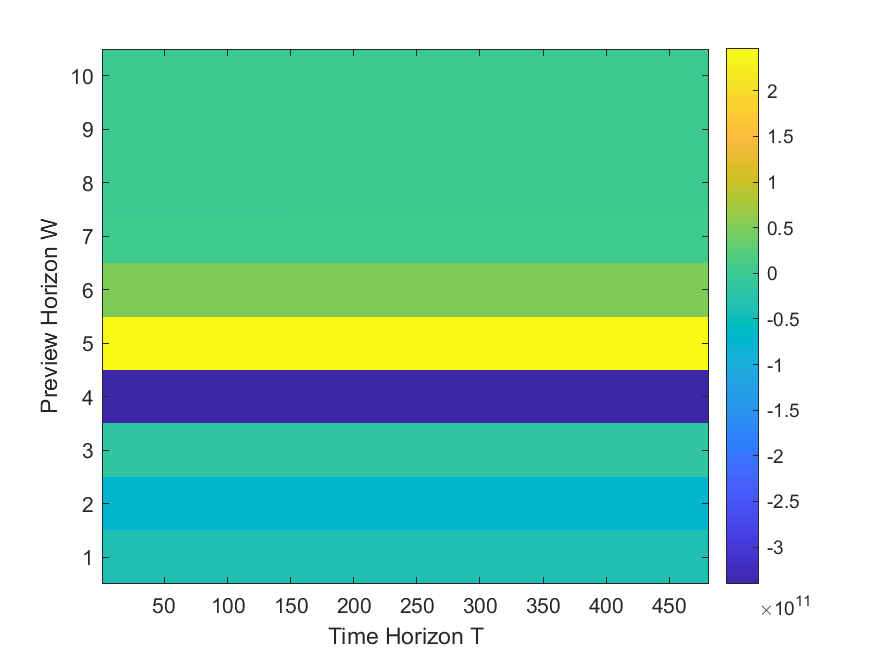}
    \label{fig:ranSysReg}}
    \\
     \subfigure[$\Phi_{T,W}$ for linearized inverted pendulum system with disturbances]{
         \includegraphics[width=.45\textwidth]{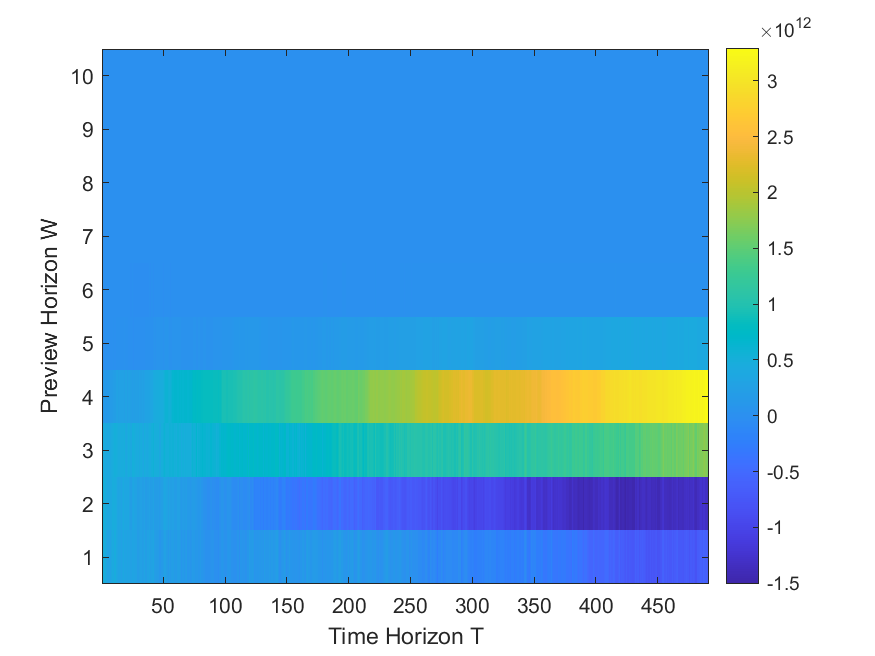}
    \label{fig:PhysSysDisReg}}
     \subfigure[$\Phi_{T,W}$ for random controllable system with disturbances]{
         \includegraphics[width=.45\textwidth]{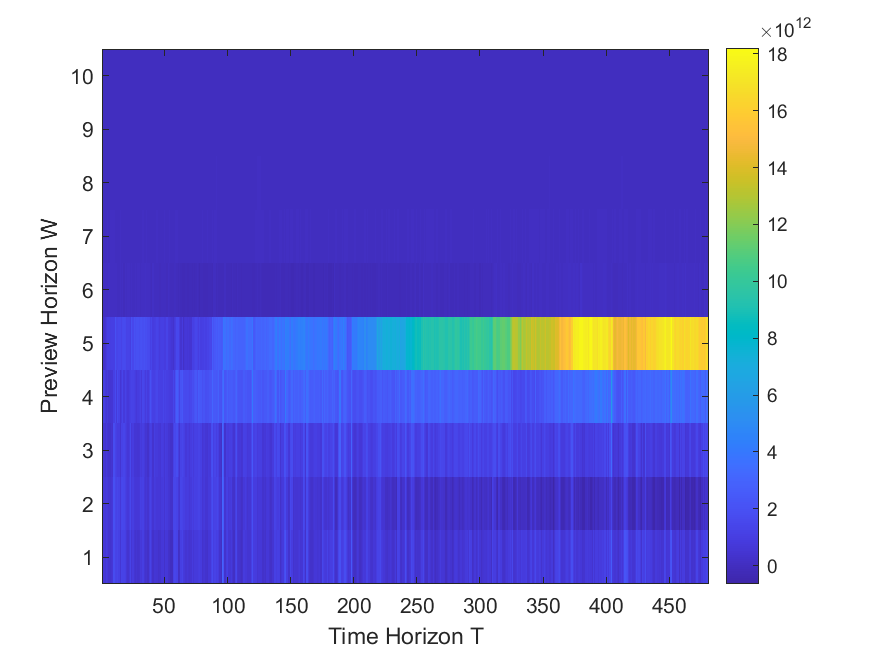}
    \label{fig:RanSysDisReg}}
\end{figure}
% \begin{figure}
%     \centering
%     \includegraphics[scale=0.65, keepaspectratio]{PhysSysMonte200.png}
%     \caption{Performance measure for simulated disturbanless linearized inverted pendulum system $\Phi_{T,W}$}
%     \label{fig:physSysReg}
% \end{figure}%\imansays{what does ``differences'' mean?}
\subsection{Random Linear Systems}\label{section:Random_Linear_Systems}
In this experiment, the linear system is randomly chosen where all elements of $A$ and $B$ are drawn uniformly within the range of $(0,10)$ and ensure the pairs of $(A,B)$ are controllable. The setting of preview window length, time horizon, cost matrices and the pole location for the control matrix $K$ from \eqref{eq:policy} are identical as what we have chosen in Section \ref{section:Linearized_Inverted_Pendulum}. The plot in Figure \ref{fig:ranSysReg} demonstrates the subtraction between the regret of control decision generated by \cite[Algorithm 1]{zhang_regret_2021} and the regret of control decision generated by our proposed method, by averaging the regret over 200 trials. As the preview window length exceed 4, our method outperforms \cite[Algorithm 1]{zhang_regret_2021}.
%\yitiansays{4 is related to the minimum number of steps to steer the state to a designated state}
%\imansays{proposed in} \cite{zhang_regret_2021}. 

The plots from Figure \ref{fig:physSysReg} and \ref{fig:ranSysReg} demonstrate that, as the preview window length exceeds the rank of the system, which is the least number of steps that require to steer the state of the system to a designated state, the proposed method outperforms the method from \cite{zhang_regret_2021}.
% \begin{figure}
%     \centering
%     \includegraphics[scale=0.65, keepaspectratio]{RandomSysMonte100.png}
%     \caption{$\Phi_{T,W}$ for random controllable systems}
%     \label{fig:ranSysReg}
% \end{figure}
\subsection{Linear Systems with disturbances}
The following experiments repeat the ones considered in Section \ref{section:Linearized_Inverted_Pendulum} and \ref{section:Random_Linear_Systems}, using the system defined in \eqref{eq:invertedPendulum} and in the presence of disturbance $w_{t} \sim \mathcal{N}(0,25I_{4\times 4})$. The setting of the preview window length, time horizon, cost matrices and the pole location for the control matrix $K$ are identical as what we have chosen from the experiment in Section \ref{section:Linearized_Inverted_Pendulum}. The method of finding $x_{t\mid t+W}$ and $u_{t\mid t+W}$ can be referred to Remark \ref{section:disturbance}. The plots in Figures \ref{fig:PhysSysDisReg} and \ref{fig:RanSysDisReg} depict the average value of $\Phi_{T,W}$ after 200 random trials. 
% \begin{figure}
%     \centering
%     \includegraphics[scale=0.65, keepaspectratio]{PhysSysMonte20Noise.png}
%     \caption{$\Phi_{T,W}$ for the linearized inverted pendulum system with disturbances}
%     \label{fig:PhysSysDisReg}
% \end{figure}

% \begin{figure}
%     \centering
%     \includegraphics[scale=0.65, keepaspectratio]{RandSysMonte20Noise.png}
%     \caption{$\Phi_{T,W}$ for random controllable system with disturbances}
%     \label{fig:RanSysDisReg}
% \end{figure}
\section{Conclusions and Future Work}\label{section:conclusions}
This paper propose a new control policy that achieves constant dynamic regret where the available information of cost matrices are sequentially reviewed as time step increases. The proposed method and consequently its regret have been demonstrated to be, contrary to the state-of-the-art, independent of the \emph{ex ante} upper and lower bound of the cost matrices. To exhibit the effect such independence, a sufficient condition is provided under which the regret upper bound of the proposed method is guaranteed to be smaller that the one from \cite[Theorem 1]{zhang_regret_2021}. This paper leads to many interesting research direction which are briefly discussed below. It would be interesting to devise a methodology for selecting a time-varying feedback gain matrix in in \eqref{eq:policy} instead of a fixed $K$ in order to further minimise the regret. Moreover, one can extend the algorithm to the case of time-varying $A_{t}$ and $B_{t}$ for the system matrices and via differential dynamic programming for nonlinear dynamics with control constraints, and establish new dynamic regret.

%where the definition of $\rho$ and $\gamma$ can be found from proposition 2 and (16) in \cite{zhang_regret_2021}, respectively. 

%If the preview length is fixed, the above regret is upper-bounded by a constant. In comparison, the result from the main theorem indicates that if the time horizon $T$ is large enough, the regret upperbound tents to a constant value. Thus, whatever preview is given or not, if $(A,B)$ is controllable and assumption \ref{assump:assumption1} and \ref{assump:assumption2} are holds, the algorithm from this report and the proposed one from [1] can produce constant regret bound. In other word, if preview window fixed, then preview is unimportant.\\

%\addtolength{\textheight}{-12cm}   % This command serves to balance the column lengths
                                  % on the last page of the document manually. It shortens
                                  % the textheight of the last page by a suitable amount.
                                  % This command does not take effect until the next page
                                  % so it should come on the page before the last. Make
                                  % sure that you do not shorten the textheight too much.

%%%%%%%%%%%%%%%%%%%%%%%%%%%%%%%%%%%%%%%%%%%%%%%%%%%%%%%%%%%%%%%%%%%%%%%%%%%%%%%%

%%%%%%%%%%%%%%%%%%%%%%%%%%%%%%%%%%%%%%%%%%%%%%%%%%%%%%%%%%%%%%%%%%%%%%%%%%%%%%%%

%%%%%%%%%%%%%%%%%%%%%%%%%%%%%%%%%%%%%%%%%%%%%%%%%%%%%%%%%%%%%%%%%%%%%%%%%%%%%%%%
%\bibliographystyle{plain}
%\bibliography{research0}
% Acknowledgments---Will not appear in anonymized version
% \acks{We thank a bunch of people.}

%\bibliographystyle{plain}
\bibliography{research0}

\appendix
% \section*{APPENDIX}
\section{Proof of Theorem~\ref{theorem:mainResult}}\label{sec:proof_main}
% In order to prove the Theorem \ref{theorem:mainResult}, we first require the following proposition establishing the analytical expression of the optimal control sequence (\ref{eq:contseq}) and the control sequence based on partial cost matrices information at (\ref{eq:seq}).
Before stating the proof of the theorem we introduce the necessary propositions and lemmas.
\begin{prop}\label{thm:prop1}
    Let
    \begin{align*}
        Q_{i\mid t+W} := 
        \begin{cases}
            Q_{i} & \text{if $i \leq t+W$}\\
            Q_{t+W} & \text{if $i > t+W$},
        \end{cases}\\
        R_{i\mid t+W} := 
        \begin{cases}
            R_{i} & \text{if $i \leq t+W$}\\
            R_{t+W} & \text{if $i > t+W$}.
        \end{cases}
    \end{align*}
    For $0\leq t\leq T-1$, $0\leq i\leq T-2$ the analytical solution of control sequence described at \eqref{eq:seq} is given by
    \begin{align*}
        P_{T-1\mid t+W} &= Q_{t\mid t+W},\\
        P_{i\mid t+W} &= A^{\mathsf{T}}P_{i+1\mid t}A + Q_{i\mid t}+A^{\mathsf{T}}P_{i+1\mid t}BK_{i\mid t},\\
        K_{i\mid t+W} &:= -(R_{i\mid t+W}+B^{\mathsf{T}}P_{i+1\mid t+W}B)^{-1}B^{\mathsf{T}}P_{i+1\mid t+W}A,\\
        u_{i\mid t+W} &= K_{i\mid t+W}x_{i\mid t+W},\\
        x_{i+1\mid t+W} &= Ax_{i\mid t+W} + Bu_{i\mid t+W}.
    \end{align*}
    The control sequence that minimize the cost in \eqref{eq:regret} can be found by setting $t = T-1$.
\end{prop}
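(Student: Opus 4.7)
The plan is to establish the proposition by standard backward dynamic programming on the cost functional $J_{t+W}$, which is a finite-horizon LQR cost with time-varying matrices $\{Q_{i\mid t+W}, R_{i\mid t+W}\}$ and linear dynamics $\xi_{i+1} = A\xi_i + B\upsilon_i$. Since the cost is quadratic in state and control and the dynamics are linear, the optimal cost-to-go at each time step is a quadratic form in the state and the optimal policy is linear in the state; the stated Riccati-type recursion then falls out of Bellman's equation.

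Concretely, I would define the cost-to-go from time $i$ as
\[
V_i(\xi) := \min_{\{\upsilon_k\}_{k=i}^{T-2}} \sum_{k=i}^{T-2}\bigl[\xi_k^{\mathsf{T}} Q_{k\mid t+W}\xi_k + \upsilon_k^{\mathsf{T}}R_{k\mid t+W}\upsilon_k\bigr] + \xi_{T-1}^{\mathsf{T}} Q_{T-1\mid t+W}\xi_{T-1},
\]
subject to $\xi_{k+1} = A\xi_k + B\upsilon_k$ with $\xi_i = \xi$, and prove by backward induction on $i \in \{T-1,\ldots,0\}$ that $V_i(\xi) = \xi^{\mathsf{T}} P_{i\mid t+W}\xi$ with the $P_{i\mid t+W}$ of the statement, and that the minimizing control is $K_{i\mid t+W}\xi$. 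The base case $i=T-1$ is immediate from the terminal cost. For the inductive step, Bellman's principle gives
\[
V_i(\xi) = \min_{\upsilon}\Bigl[\xi^{\mathsf{T}} Q_{i\mid t+W}\xi + \upsilon^{\mathsf{T}} R_{i\mid t+W}\upsilon + (A\xi + B\upsilon)^{\mathsf{T}} P_{i+1\mid t+W}(A\xi + B\upsilon)\Bigr].
\]
Since $R_{i\mid t+W} \succ 0$ by Assumption \ref{assumption:cost} and $P_{i+1\mid t+W} \succeq 0$ inductively, the Hessian $R_{i\mid t+W} + B^{\mathsf{T}}P_{i+1\mid t+W}B$ is positive definite, so the minimization is strictly convex. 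Setting the gradient to zero yields the unique minimizer $\upsilon^{\star} = K_{i\mid t+W}\xi$ with $K_{i\mid t+W}$ as stated; substituting back and collecting quadratic terms produces $V_i(\xi) = \xi^{\mathsf{T}} P_{i\mid t+W}\xi$ with the recursion in the statement. Propagating this linear feedback forward from $\xi_0 = \bar{x}_0$ through the closed-loop dynamics yields the sequences $\{x_{i\mid t+W}, u_{i\mid t+W}\}$.

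There is no substantive obstacle here: the derivation is the classical finite-horizon LQR computation (cf.\ \citep[Chapter 2]{anderson_optimal_2007}). The only care required is bookkeeping around the fact that $Q_{i\mid t+W}$ and $R_{i\mid t+W}$ are frozen at their $t+W$ values for $i > t+W$, but since the Riccati recursion operates pointwise in $i$ using exactly these matrices at each step, the freezing enters only through the choice of $Q_{\cdot\mid t+W}$ and $R_{\cdot\mid t+W}$ and does not alter the structure of the argument. The final clause of the proposition follows immediately: at $t = T-1$ all cost matrices are revealed within the window, so $J_{t+W}$ coincides with $J_T$ from \eqref{eq:cost} and the same recursion returns the globally optimal control sequence.
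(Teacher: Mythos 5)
Your proposal is correct and is essentially the paper's own proof: the paper simply defers to the classical finite-horizon LQR derivation in \citep[Chapter 2.4]{anderson_optimal_2007}, which is exactly the backward dynamic programming / completion-of-squares argument you spell out, with the frozen matrices $Q_{i\mid t+W}$, $R_{i\mid t+W}$ entering only as the per-stage weights. No gaps; the only discrepancies are typographical inconsistencies in the proposition's subscripts (e.g., $P_{i+1\mid t}$ versus $P_{i+1\mid t+W}$), which your derivation implicitly resolves in the correct way.
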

The above proposition is a consequence of \cite[Chapter 2.4]{anderson_optimal_2007}.%\imansays{The point of my comments is that you learn how to do them If you have a problem skipping certain comments it is important to slow down. I have used the option in [] in the cite command.}

\begin{comment}
    \begin{prop}[\cite{c3}]
    For $0\leq t\leq T-2$, the analytical solution of optimal control sequence described at (\ref{eq:contseq}) is given by
    \begin{align*}
        &u_{t}^{*} = -(R_{t}+B^{\mathsf{T}}P_{t+1}^{*}B)^{-1}B^{\mathsf{T}}P_{t+1}^{*}Ax_{t}^{*} = K_{t}^{*}x_{t}^{*}
        \\
        &P_{t}^{*} = A^{\mathsf{T}}P_{t+1}^{*}A + Q_{t} + A^{\mathsf{T}}P_{t+1}^{*}BK_{t}^{*}
        \\
        &P_{T-1}^{*} = Q_{T-1}
    \end{align*}
    Similarly, for $0\leq i\leq T-2$, the analytical solution of problem (\ref{eq:seq}) is
    \begin{align*}
        &u_{i\mid t} = -(R_{i\mid t}+B^{\mathsf{T}}P_{i+1\mid t}B)^{-1}B^{\mathsf{T}}P_{i+1\mid t}Ax_{i\mid t}=K_{i\mid t}x_{i\mid t}
        \\
        &x_{i+1\mid t} = Ax_{i\mid t} + Bu_{i\mid t}\\
        &P_{i\mid t} = A^{\mathsf{T}}P_{i+1\mid t}A + Q_{i\mid t}+A^{\mathsf{T}}P_{i+1\mid t}BK_{i\mid t} \\
        &P_{T-1\mid t} = Q_{t}
    \end{align*}
\end{prop}
\end{comment}

The next lemma states that the matrices $P_{i\mid t}$ described in the above proposition are upper and lower bounded if the cost matrices $Q_{i}$ and $R_{i}$ are upper and lower bounded.
\begin{lemma}\label{thm:lemmaPmax}
For $0 \leq t,i\leq T-1$, under Assumption \ref{assumption:cost}, there exists a positive definite matrix $\bar{P}_{max}$ such that $
    Q_{min} \preceq \bar{Q}_{min} \preceq P_{i\mid t} \preceq \bar{P}_{max}.$
\end{lemma}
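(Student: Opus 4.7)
The plan is to prove the two inequalities $\bar{Q}_{min} \preceq P_{i\mid t}$ and $P_{i\mid t} \preceq \bar{P}_{max}$ separately, after first verifying that $\bar{P}_{max}$ is well-defined. The existence (and positive definiteness) of $\bar{P}_{max}$ as the unique stabilizing solution of the algebraic Riccati equation associated with $(\bar{Q}_{max}, \bar{R}_{max})$ follows from the controllability of $(A,B)$ assumed for system \eqref{eq:linsys} together with $\bar{Q}_{max} \succ 0$ (which implies detectability of $(A, \bar{Q}_{max}^{1/2})$) and $\bar{R}_{max} \succ 0$, by standard discrete-time LQR theory (cf.\ \citep[Chapter 4]{anderson_optimal_2007}). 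I would cite this rather than reprove it.

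For the lower bound, I would use the equivalent ``completed-square'' form of the Riccati recursion from Proposition~\ref{thm:prop1}, namely
\begin{equation*}
P_{i\mid t} = Q_{i\mid t} + A^{\mathsf{T}}P_{i+1\mid t}A - A^{\mathsf{T}}P_{i+1\mid t}B\bigl(R_{i\mid t}+B^{\mathsf{T}}P_{i+1\mid t}B\bigr)^{-1}B^{\mathsf{T}}P_{i+1\mid t}A.
\end{equation*}
A Schur-complement argument shows that the trailing two terms form a positive semi-definite matrix whenever $P_{i+1\mid t} \succeq 0$ and $R_{i\mid t} \succ 0$, so by backward induction $P_{i\mid t} \succeq Q_{i\mid t}$. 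Combining with the definition $\bar{Q}_{min} = \min_{0\leq k\leq T-1}Q_{k}$ and Assumption~\ref{assumption:cost} yields $P_{i\mid t} \succeq Q_{i\mid t} \succeq \bar{Q}_{min} \succeq Q_{min}$.

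For the upper bound, I would define the Riccati operator
\begin{equation*}
\mathcal{R}_{Q,R}(P) := Q + A^{\mathsf{T}}PA - A^{\mathsf{T}}PB(R+B^{\mathsf{T}}PB)^{-1}B^{\mathsf{T}}PA
\end{equation*}
and use two standard monotonicity properties: $\mathcal{R}_{Q,R}(P)$ is monotone non-decreasing in each of $Q$, $R$ and $P$ in the Loewner order (these can be seen either from the dynamic programming interpretation of $x^{\mathsf{T}}Px$ as a one-step cost-to-go, or by direct matrix manipulation). I would then perform backward induction on $i$. For the base case, $P_{T-1\mid t} = Q_{t\mid t+W} \preceq \bar{Q}_{max} \preceq \bar{P}_{max}$, where the last inequality follows from the algebraic Riccati equation $\bar{P}_{max} = \bar{Q}_{max} + (\text{PSD terms})$. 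For the inductive step, assuming $P_{i+1\mid t} \preceq \bar{P}_{max}$, monotonicity in $Q,R$ gives $P_{i\mid t} = \mathcal{R}_{Q_{i\mid t},R_{i\mid t}}(P_{i+1\mid t}) \preceq \mathcal{R}_{\bar{Q}_{max},\bar{R}_{max}}(P_{i+1\mid t})$, and monotonicity in $P$ yields $\mathcal{R}_{\bar{Q}_{max},\bar{R}_{max}}(P_{i+1\mid t}) \preceq \mathcal{R}_{\bar{Q}_{max},\bar{R}_{max}}(\bar{P}_{max}) = \bar{P}_{max}$.

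I expect the only genuinely delicate step is the triple monotonicity of $\mathcal{R}_{Q,R}(\cdot)$; monotonicity in $Q$ is immediate, but monotonicity in $R$ and in $P$ each require a short algebraic argument (e.g., via the matrix inversion lemma applied to the $-A^{\mathsf{T}}PB(R+B^{\mathsf{T}}PB)^{-1}B^{\mathsf{T}}PA$ term) or a DP argument. Since these are textbook facts, I would state them with a brief justification rather than a full derivation, keeping the proof concise.
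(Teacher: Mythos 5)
Your argument is correct and is essentially the argument the paper defers to by citation: the paper gives no proof of its own, pointing instead to \cite[Appendix D, Proposition 11]{zhang_regret_2021}, which rests on the same ingredients you use --- the Schur-complement positivity of $A^{\mathsf{T}}PA - A^{\mathsf{T}}PB(R+B^{\mathsf{T}}PB)^{-1}B^{\mathsf{T}}PA$ for the lower bound, and monotonicity of the Riccati operator in $Q$, $R$, $P$ together with the fixed-point property of $\bar{P}_{max}$ for the upper bound. The only point you (like the paper) leave implicit is that the Loewner-order extrema $\bar{Q}_{max}$, $\bar{Q}_{min}$, $\bar{R}_{max}$, $\bar{R}_{min}$ need not exist for an arbitrary matrix sequence, so their well-definedness is an implicit hypothesis rather than a consequence of Assumption \ref{assumption:cost}.
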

The proof of the above lemma is similar to the proof in %\imansays{Do you mean the proof is a subset of the arguments of this? Or do you mean the proof is a consequence of this prop, or do you wan to say they are similar? or are they the same? don't be vague.}
 \cite[Appendix D, Proposition 11]{zhang_regret_2021}. Based on the previous proposition and lemma, the following lemmas reveals the upper bound of matrix norm for $P_{i\mid t_{0}}-P_{i\mid t}$ and $K_{i\mid t}-K_{i\mid t_{0}}$ for any $0 \leq i \leq t\leq t_0\leq T-1$. These upper bounds can infer the exponential stability of control matrices $K_{i\mid t}$ and $K_{i\mid t_{0}}$ described in Proposition \ref{thm:prop1}.

\begin{lemma}\label{thm:lemma2}
For any $0 \leq i \leq t\leq t_{0}\leq T-1$, the following hold:
\begin{align*}
    \left\|P_{i\mid t}-P_{i\mid t_{0}}\right\| \leq \frac{\lambda_{max}^{2}(\bar{P}_{max})}{\lambda_{min}(\bar{Q}_{min})}\gamma^{t+1-i},
\end{align*}
and
\begin{align*}
    \left\|K_{i\mid t}-K_{i\mid t_{0}}\right\| \leq C_{K}\gamma^{t-i}.
\end{align*}
\begin{comment}
    Suppose that
    \begin{align*}
        &\alpha := \max_{0\leq t \leq T-2}\max_{0 \leq i \leq t-1}\{\lambda_{max}(A^{\mathsf{T}}P_{i+1}^{*}A), \lambda_{max}(A^{\mathsf{T}}P_{i+1\mid t}A)\}\\
        &\beta := \min_{0\leq t \leq T-2}\lambda_{min}(Q_{t})\\
        &\gamma := \frac{\alpha}{\alpha+\beta}
    \end{align*}
    we have
    \begin{equation*}
        \delta_{\infty}(P_{i}^{*},P_{i\mid t}) \leq \gamma\delta_{\infty}(P_{i+1}^{*},P_{i+1\mid t})
    \end{equation*}
    This further implies that
    \begin{equation*}
        \|P_{i}^{*}-P_{i\mid t}\| \leq \gamma^{t-i+1}\frac{\lambda_{max}^{2}(P_{max})}{\lambda_{min}(Q_{min})}
    \end{equation*}
    and
    \begin{equation*}
        \|K_{i\mid t}-K_{i}^{*}\| \leq \|(R_{min}+B^{\mathsf{T}}Q_{min}B)^{-1}\|^{2}\|R_{max}B^{\mathsf{T}}\|\frac{\lambda_{max}^{2}(P_{max})}{\lambda_{min}(Q_{min})}\gamma^{t-i}
    \end{equation*}
\end{comment}
    
\end{lemma}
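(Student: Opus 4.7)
The plan is to exploit the fact that, for all indices $j$ with $i \leq j \leq t$, the cost matrices appearing in the backward Riccati recursions for $P_{\cdot\mid t}$ and $P_{\cdot\mid t_0}$ coincide: $Q_{j\mid t+W} = Q_{j\mid t_0+W} = Q_j$ and analogously for $R$. Consequently, both sequences evolve on this index range under exactly the same one-step Riccati operator $\mathcal{R}_{Q,R}(P) := A^{\mathsf{T}} P A + Q - A^{\mathsf{T}} P B(R + B^{\mathsf{T}} P B)^{-1} B^{\mathsf{T}} P A$, and the difference at index $i$ is obtained by applying this common operator $t+1-i$ times to the mismatched terminal pair $(P_{t+1\mid t}, P_{t+1\mid t_0})$.

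First I would establish a contraction estimate for $\mathcal{R}_{Q,R}$ in the Thompson part-metric $\delta_{\infty}(X,Y) := \log\max\{M(X/Y), M(Y/X)\}$ with $M(X/Y) := \inf\{\lambda > 0 : X \preceq \lambda Y\}$. Rewriting the operator via the Woodbury identity as $\mathcal{R}_{Q,R}(P) = Q + A^{\mathsf{T}}(P^{-1} + B R^{-1} B^{\mathsf{T}})^{-1} A$ exposes it as the composition of the operator-monotone, $\delta_{\infty}$-non-expansive map $P \mapsto A^{\mathsf{T}}(P^{-1} + B R^{-1} B^{\mathsf{T}})^{-1} A$ with a translation by $Q$. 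Invoking the standard translation-contraction $\delta_{\infty}(Q+X_1, Q+X_2) \leq \tfrac{\alpha}{\alpha+\beta}\,\delta_{\infty}(X_1, X_2)$, valid when $Q \succeq \beta I$ and $X_j \preceq \alpha I$, yields $\delta_{\infty}(\mathcal{R}_{Q_j,R_j}(P_1), \mathcal{R}_{Q_j,R_j}(P_2)) \leq \gamma\,\delta_{\infty}(P_1, P_2)$, where the hypotheses on $\alpha$ and $\beta$ are supplied by their definitions in Theorem \ref{theorem:mainResult} together with Lemma \ref{thm:lemmaPmax}. Iterating this from $j = t$ down to $j = i$ produces
\begin{equation*}
\delta_{\infty}(P_{i\mid t}, P_{i\mid t_0}) \leq \gamma^{t+1-i}\,\delta_{\infty}(P_{t+1\mid t}, P_{t+1\mid t_0}).
\end{equation*}
Next I would translate this bound to spectral norm. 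Lemma \ref{thm:lemmaPmax} sandwiches both terminal matrices between $\bar{Q}_{min}$ and $\bar{P}_{max}$, whence $\delta_{\infty}(P_{t+1\mid t}, P_{t+1\mid t_0}) \leq \log(\lambda_{max}(\bar{P}_{max})/\lambda_{min}(\bar{Q}_{min}))$. Combining with the elementary inequality $\|P_1 - P_2\| \leq (e^{\delta_{\infty}(P_1,P_2)} - 1)\max(\|P_1\|, \|P_2\|)$ and $\|P_{i\mid \cdot}\| \leq \lambda_{max}(\bar{P}_{max})$, and absorbing the resulting logarithmic factor into the ratio $\lambda_{max}(\bar{P}_{max})/\lambda_{min}(\bar{Q}_{min})$, yields the first claimed bound
\begin{equation*}
\|P_{i\mid t} - P_{i\mid t_0}\| \leq \frac{\lambda_{max}^{2}(\bar{P}_{max})}{\lambda_{min}(\bar{Q}_{min})}\,\gamma^{t+1-i}.
\end{equation*}

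For the gain bound I would expand $K_{i\mid t} - K_{i\mid t_0}$ using the formula in Proposition \ref{thm:prop1} and add and subtract $(R_i + B^{\mathsf{T}} P_{i+1\mid t_0} B)^{-1} B^{\mathsf{T}} P_{i+1\mid t} A$. This splits the difference into two terms. The first, $(R_i + B^{\mathsf{T}} P_{i+1\mid t_0} B)^{-1} B^{\mathsf{T}} (P_{i+1\mid t} - P_{i+1\mid t_0}) A$, is controlled directly by the first inequality at index $i+1$. The second term involves the resolvent difference $(R_i + B^{\mathsf{T}} P_{i+1\mid t} B)^{-1} - (R_i + B^{\mathsf{T}} P_{i+1\mid t_0} B)^{-1}$, which by the identity $X^{-1} - Y^{-1} = X^{-1}(Y-X)Y^{-1}$ also factors through $P_{i+1\mid t} - P_{i+1\mid t_0}$. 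Bounding the leftover matrix factors via $(R_i + B^{\mathsf{T}} P B)^{-1} \preceq (\bar{R}_{min} + B^{\mathsf{T}}\bar{Q}_{min}B)^{-1}$, $\|R_i\| \leq \|\bar{R}_{max}\|$, and $\|P\| \leq \lambda_{max}(\bar{P}_{max})$, and invoking the first inequality of the lemma, yields the second claimed bound with $C_K$ exactly as defined in Theorem \ref{theorem:mainResult}; the reason one power of $\gamma$ is lost is that the first inequality is applied at index $i+1$, producing $\gamma^{t-i}$.

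The main obstacle will be establishing the Riccati contraction $\delta_{\infty}(\mathcal{R}_{Q,R}(P_1), \mathcal{R}_{Q,R}(P_2)) \leq \gamma\,\delta_{\infty}(P_1, P_2)$ with the sharp constant $\gamma = \alpha/(\alpha+\beta)$; this is the technical core and relies on careful use of the operator-monotonicity of $P \mapsto (P^{-1}+S)^{-1}$ together with the translation-contraction lemma for $\delta_{\infty}$. The spectral-norm conversion and the subsequent algebraic manipulations producing $C_K$ are comparatively routine bookkeeping, provided one tracks the dependencies on $\bar{P}_{max}$, $\bar{Q}_{min}$, $\bar{R}_{min}$, and $\bar{R}_{max}$ carefully.
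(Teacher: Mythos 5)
Your treatment of the first inequality coincides with the paper's: both establish the one-step contraction of the Riccati map in the Thompson part-metric via the Woodbury form and the translation-contraction estimate with constant $\alpha/(\alpha+\beta)$ (the paper simply cites this as Lemma D.2 of Krauth et al.\ rather than reproving it), iterate it $t+1-i$ times, bound the terminal distance by $\log(\lambda_{max}(\bar{P}_{max})/\lambda_{min}(\bar{Q}_{min}))$ via Lemma \ref{thm:lemmaPmax}, and convert to the spectral norm using the inequality $e^{ax}-1\leq ae^{x}$. Where you genuinely diverge is the gain bound. The paper inserts $G_{1}^{-1}G_{2}^{-1}G_{2}$ and $G_{2}^{-1}G_{1}^{-1}G_{1}$ so as to extract the common factor $\|G_{2}^{-1}G_{1}^{-1}\|$, and then discards the commutator terms $G_{1}G_{2}-G_{2}G_{1}$ and $P_{i+1\mid t_{0}}BB^{\mathsf{T}}P_{i+1\mid t}-P_{i+1\mid t}BB^{\mathsf{T}}P_{i+1\mid t_{0}}$ by asserting that the spectral norm of a skew-symmetric matrix is zero; this is how the factor $\|\bar{R}_{max}B^{\mathsf{T}}\|$ in $C_{K}$ arises. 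Your add-and-subtract plus resolvent-identity decomposition is the more standard route and does not need that assertion (which, as stated, holds only when $G_{1}$ and $G_{2}$ commute, since a nonzero skew-symmetric matrix has nonzero norm). The price is that your constant is not literally $C_{K}$: tracking your two terms gives a prefactor of the form $\|G_{2}^{-1}\|\,\|B\|\,\|A\|\bigl(1+\|G_{1}^{-1}\|\,\|B\|^{2}\lambda_{max}(\bar{P}_{max})\bigr)$ multiplying $\|P_{i+1\mid t}-P_{i+1\mid t_{0}}\|$, with explicit $\|A\|$ and $\lambda_{max}(\bar{P}_{max})$ factors that do not appear in $C_{K}$. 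So you obtain the same qualitative statement (a uniform constant times $\gamma^{t-i}$, which is all the downstream regret analysis uses), but your claim to recover ``$C_{K}$ exactly as defined'' should be softened to ``a constant of the same character.''
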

\begin{proof}
Before proceeding with the proof, we first define $\delta_{\infty}(\cdot,\cdot)$ as
\begin{align*}
    \delta_{\infty}(X,Y) := \|\log(X^{-\frac{1}{2}}YX^{\frac{1}{2}})\|,
\end{align*}
for positive semi-definite matrices $X$ and $Y$.
Since $Q_{i},R_{i},P_{i\mid t_{0}},P_{i\mid t}$ are positive definite, by \cite[Lemma D.2]{krauth_finite-time_2019}, we have that
\begin{align*}
    \delta_{\infty}(P_{i\mid t},P_{i\mid t_0})
    &= \delta_{\infty}(Q_{t} + A^{\mathsf{T}}P_{i+1\mid t_0}^{\frac{1}{2}}(I+P_{i+1\mid t_0}^{\frac{1}{2}}BR_{t}^{-1}B^{\mathsf{T}}P_{i+1\mid t_0}^{\frac{1}{2}})^{-1}P_{i+1\mid t_0}^{\frac{1}{2}}A,\\
    &\quad Q_{t} + A^{\mathsf{T}}P_{i+1\mid t}^{\frac{1}{2}}(I+P_{i+1\mid t}^{\frac{1}{2}}BR_{t}^{-1}B^{\mathsf{T}}P_{i+1\mid t}^{\frac{1}{2}})^{-1}P_{i+1\mid t}^{\frac{1}{2}}A) \\
    &\leq \frac{\alpha}{\alpha+\beta}\delta_{\infty}((P_{i+1\mid t_0}^{-1}+BR_{t}^{-1}B^{\mathsf{T}})^{-1},(P_{i+1\mid t}^{-1}+BR_{t}^{-1}B^{\mathsf{T}})^{-1})
    \\
    &\leq \frac{\alpha}{\alpha+\beta}\delta_{\infty}(P_{i+1\mid t_0},P_{i+1\mid t})\leq \gamma\delta_{\infty}(P_{i+1\mid t_0},P_{i+1\mid t}).
\end{align*}
Furthermore, we have
\begin{align*}
    \delta_{\infty}(P_{i\mid t_0},P_{i\mid t}) \leq \gamma^{t-i+1}\delta_{\infty}(P_{t+1\mid t_0},P_{t+1\mid t}).
\end{align*}
%\imansays{fix the citations. also $=:$ should be used instead of $:=$}
Based on \cite[Lemma 6]{zhang_regret_2021}, we can further deduce the last step above to
\begin{align*}
    \delta_{\infty}(P_{i\mid t_0},P_{i\mid t}) \leq \underbrace{\gamma^{t-i+1}\log(\frac{\lambda_{max}(\bar{P}_{max})}{\lambda_{min}(\bar{Q}_{min})}).}_{c:=}
\end{align*}
Based on \cite[Lemma 7]{zhang_regret_2021}, we can conclude that
\begin{align*}
    \|P_{i\mid t_0}-P_{i\mid t}\| &\leq \lambda_{max}(\bar{P}_{max})\frac{e^{c}-1}{c}\delta_{\infty}(P_{i\mid t_0},P_{i\mid t})\\
    &\leq \lambda_{max}(\bar{P}_{max})\frac{e^{c}-1}{c}c\\
    &\leq \lambda_{max}(\bar{P}_{max})(e^{c}-1) \\
    &\leq \gamma^{t-i+1}\frac{\lambda^{2}_{max}(\bar{P}_{max})}{\lambda_{min}(\bar{Q}_{min})}.
\end{align*}
The last step of inequality above is a consequence of  the inequality  $e^{ax} - 1 \leq ae^{x}$
for $0 \leq a \leq 1$ and $x \geq 0$.  Furthermore, since
\begin{align*}
    \|K_{i\mid t}-K_{i\mid t_0}\|
    &= \|-(R_{i}+B^{\mathsf{T}}P_{i+1\mid t}B)^{-1}B^{\mathsf{T}}P_{i+1\mid t}A+(R_{i}+B^{\mathsf{T}}P_{i+1\mid t_0}B)^{-1}B^{\mathsf{T}}P_{i+1\mid t_0}A\|.
\end{align*}
%   &\leq \|(R_{i}+B^{\mathsf{T}}P_{i+1\mid t}B)^{-1}\|\| (R_{i}+B^{\mathsf{T}}P_{i+1\mid t}B)\\
%    &(R_{i}+B^{\mathsf{T}}P_{i+1\mid t_0}B)^{-1}B^{\mathsf{T}}P_{i+1\mid t_0}-B^{\mathsf{T}}P_{i+1\mid t}\|\|A\|\\
Let $G_{1} = (R_{i}+B^{\mathsf{T}}P_{i+1\mid t}B)$, $G_{2}=(R_{i}+B^{\mathsf{T}}P_{i+1\mid t_0}B)$. We rearrange the above equations as
\begin{align}
        \|\!K_{i\mid t}\!-\!K_{i\mid t_0}\!\|\!
    &=\! \|G_{1}^{-1}B^{\mathsf{T}}P_{i+1\mid t}-G_{2}^{-1}B^{\mathsf{T}}P_{i+1\mid t_0}\| \notag\\
    &=\! \|G_{1}^{-1}G_{2}^{-1}G_{2}B^{\mathsf{T}}P_{i+1\mid t}-G_{2}^{-1}G_{1}^{-1}G_{1}B^{\mathsf{T}}P_{i+1\mid t_0}\| \notag\\
    &\leq\! \|G_{2}^{-1}G_{1}^{-1}\|\|(G_{1}G_{2})(G_{1}^{-1}G_{2}^{-1})G_{2}B^{\mathsf{T}}P_{i+1\mid t}-G_{1}B^{\mathsf{T}}P_{i+1\mid t_0}\| \notag\\
    &=\! \|G_{2}^{-1}G_{1}^{-1}\|\|((G_{1}G_{2})(G_{1}^{-1}G_{2}^{-1})\!-\!I)G_{2}B^{\mathsf{T}}P_{i+1\mid t}+G_{2}B^{\mathsf{T}}P_{i+1\mid t}-G_{1}B^{\mathsf{T}}P_{i+1\mid t_0}\| \notag\\
    &\leq\! \|G_{2}^{-1}G_{1}^{-1}\|(\|(G_{1}G_{2}-G_{2}G_{1})G_{1}^{-1}B^{\mathsf{T}}P_{i+1\mid t}\|+\|G_{2}B^{\mathsf{T}}P_{i+1\mid t}-G_{1}B^{\mathsf{T}}P_{i+1\mid t_0}\|)\notag\\
    &\leq\! \|G_{2}^{-1}G_{1}^{-1}\|\|G_{2}B^{\mathsf{T}}P_{i+1\mid t}-G_{1}B^{\mathsf{T}}P_{i+1\mid t_0}\|. \label{eq:lemmaPIneq1}
\end{align}
The last two steps from the above is due to $G_{1}$, $G_{2}$ being symmetric matrices, being a $G_{1}G_{2}-G_{2}G_{1}$ is skew symmetric matrix, and the fact that the induced $2$-norm of a skew symmetric matrix is 0. %\imansays{Check how I have rewritten you statement.}
Moreover,
\begin{align}
    \left \|(R_{i}+B^{\mathsf{T}}\right. & \left. P_{i+1\mid t_0}B) B^{\mathsf{T}}P_{i+1\mid t}  -(R_{i}+B^{\mathsf{T}}P_{i+1\mid t}B)B^{\mathsf{T}}P_{i+1\mid t_0}\right \|\notag\\ 
    &\leq \|R_{i}B^{\mathsf{T}}(P_{i+1\mid t} - P_{i+1\mid t_0})\| + \|B\|\|P_{i+1\mid t_0}(BB^{\mathsf{T}})P_{i+1\mid t}-P_{i+1\mid t}(BB^{\mathsf{T}})P_{i+1\mid t_0}\| \notag\\
    &\leq \|R_{i}B^{\mathsf{T}}\|\|P_{i+1\mid t}-P_{i+1\mid t_0}\|.\label{eq:lemmaPIneq2}
\end{align}
Thus, substituting \eqref{eq:lemmaPIneq2} in \eqref{eq:lemmaPIneq1}, we have
\begin{align*}
    \|K_{i\mid t}-K_{i\mid t_0}\|
    &\leq \|G_{2}^{-1}G_{1}^{-1}\|\|\bar{R}_{max}B^{\mathsf{T}}\|\|P_{i+1\mid t}-P_{i+1\mid t_0}\|\\
    &\leq \|G_{2}^{-1}G_{1}^{-1}\|\|\bar{R}_{max}B^{\mathsf{T}}\|\frac{\lambda_{max}^{2}(\bar{P}_{max})}{\lambda_{min}(\bar{Q}_{min})}\gamma^{t-i} \\
    &\leq \|(\bar{R}_{min}+B^{\mathsf{T}}\bar{Q}_{min}B)^{-1}\|^{2}\|\bar{R}_{max}B^{\mathsf{T}}\| \frac{\lambda_{max}^{2}(\bar{P}_{max})}{\lambda_{min}(\bar{Q}_{min})}\gamma^{t-i}= C_{K}\gamma^{t-i}.
\end{align*}
\end{proof}
\begin{lemma}\label{thm:lemma3}
For time horizon $T$, suppose $0 \leq t_{0} \leq t_{1} \leq t \leq T-2$. Suppose $C$ and $\eta$ are given in Theorem \ref{theorem:mainResult}.
%Let $C = \frac{\lambda_{max}(P_{max})}{\lambda_{min}(Q_{min})}$, $\eta = \sqrt{1-\frac{\lambda_{min}(Q_{min})}{\lambda_{max}(P_{max})}}$
Then,
\begin{equation*}
    \left \|\prod_{i = t_{0}}^{t_{1}} (A+BK_{i\mid t})\right \| \leq C\eta^{t_{1}-t_{0}+1}.
\end{equation*}
\end{lemma}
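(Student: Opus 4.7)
The plan is a standard Lyapunov-type argument in which the Riccati matrices $P_{i\mid t}$ from Proposition~\ref{thm:prop1} serve as a time-varying quadratic Lyapunov function for the closed-loop transitions $A+BK_{i\mid t}$. First I would rewrite the Riccati recursion of Proposition~\ref{thm:prop1} in closed-loop form: substituting $K_{i\mid t}=-(R_{i\mid t}+B^{\mathsf{T}}P_{i+1\mid t}B)^{-1}B^{\mathsf{T}}P_{i+1\mid t}A$ into the recursion for $P_{i\mid t}$ and completing the square yields
$$P_{i\mid t} = Q_{i\mid t} + K_{i\mid t}^{\mathsf{T}}R_{i\mid t}K_{i\mid t} + (A+BK_{i\mid t})^{\mathsf{T}}P_{i+1\mid t}(A+BK_{i\mid t}).$$
Dropping the positive semidefinite term $K_{i\mid t}^{\mathsf{T}}R_{i\mid t}K_{i\mid t}$ and invoking Lemma~\ref{thm:lemmaPmax} (which gives $P_{i\mid t}\preceq\bar{P}_{max}$ and thus $Q_{i\mid t}\succeq\bar{Q}_{min}\succeq\lambda_{min}(\bar{Q}_{min})I\succeq(\lambda_{min}(\bar{Q}_{min})/\lambda_{max}(\bar{P}_{max}))\,P_{i\mid t}$) produces the one-step contraction
$$(A+BK_{i\mid t})^{\mathsf{T}}P_{i+1\mid t}(A+BK_{i\mid t}) \preceq P_{i\mid t}-Q_{i\mid t} \preceq \eta^{2}\,P_{i\mid t},$$
with $\eta$ as in Theorem~\ref{theorem:mainResult}.

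Next I would telescope along the closed-loop trajectory. Fix a unit vector $v\in\mathbb{R}^{n}$, define $w_{t_0}:=v$ and $w_{k+1}:=(A+BK_{k\mid t})w_{k}$ for $k=t_0,\ldots,t_1$, and iterate the contraction inequality $t_1-t_0+1$ times to obtain
$$w_{t_1+1}^{\mathsf{T}}P_{t_1+1\mid t}w_{t_1+1} \leq \eta^{2(t_1-t_0+1)}\,v^{\mathsf{T}}P_{t_0\mid t}v.$$
Lemma~\ref{thm:lemmaPmax} also provides $v^{\mathsf{T}}P_{t_0\mid t}v \leq \lambda_{max}(\bar{P}_{max})$ and $w_{t_1+1}^{\mathsf{T}}P_{t_1+1\mid t}w_{t_1+1}\geq\lambda_{min}(\bar{Q}_{min})\|w_{t_1+1}\|^{2}$, so rearranging gives $\|w_{t_1+1}\|^{2} \leq C\,\eta^{2(t_1-t_0+1)}$. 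Taking the square root and then the supremum over unit $v$ bounds the product by $\sqrt{C}\,\eta^{t_1-t_0+1}$, and the claim follows because Lemma~\ref{thm:lemmaPmax} ensures $\bar{P}_{max}\succeq\bar{Q}_{min}$, hence $C\geq 1$ and $\sqrt{C}\leq C$.

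There is no substantive obstacle here: the only technical care required is in (i) deriving the closed-loop Riccati identity cleanly, which is a routine but sign-sensitive algebraic manipulation, and (ii) matching the factor ordering in $\prod_{i=t_0}^{t_1}(A+BK_{i\mid t})$ to the forward iteration $w_{k+1}=(A+BK_{k\mid t})w_{k}$, which is the natural convention for state-transition products. The Lyapunov bookkeeping itself is standard and telescopes cleanly because the sub-optimal indexing $P_{i\mid t}$ already sits uniformly between $\bar{Q}_{min}$ and $\bar{P}_{max}$ by Lemma~\ref{thm:lemmaPmax}, so no per-step refinement of the contraction factor is needed.
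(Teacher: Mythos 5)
Your Lyapunov argument is correct and is exactly the standard route: the paper itself gives no explicit proof but defers to the proof of Proposition 2 in Appendix E of \cite{zhang_regret_2021}, which proceeds by the same closed-loop Riccati identity, the one-step contraction $(A+BK_{i\mid t})^{\mathsf{T}}P_{i+1\mid t}(A+BK_{i\mid t})\preceq\eta^{2}P_{i\mid t}$, and the telescoping sandwich between $\bar{Q}_{min}$ and $\bar{P}_{max}$. Your additional observation that $C\geq 1$ (so $\sqrt{C}\leq C$) correctly closes the small gap between the natural constant $\sqrt{C}$ and the stated constant $C$.
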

This lemma can be proved following the same steps as those found in the proof of \cite[Appendix E,Proposition 2]{zhang_regret_2021}.

The next lemma establishes the bound of the distance between the state that generated by control law at (\ref{eq:seq}) and the optimal state that generated by (\ref{eq:contseq}), together with the the differences between the control defined at (\ref{eq:contseq}) and (\ref{eq:seq}).
\begin{lemma}\label{thm:lemma4}
For $t \geq 1$, the norm of the difference between the state vector $x_{t}$ generated by \eqref{eq:policy} and the optimal state vector $x_{t}^{*}$ generated by control sequence \eqref{eq:contseq} is
\begin{equation}\label{eq:stateDifferenceBound}
    \|x_{t}-x_{t}^*\| \leq \frac{C^2C_K\|\bar{x}_{0}\|\gamma^W}{\gamma-1}(\eta^{t-1}\gamma(\gamma^t-1)
    +C_f(\frac{\eta\gamma}{q}(\frac{q^{t-1}-(\eta\gamma)^{t-1}}{q-\eta\gamma})-\frac{\eta}{q}(\frac{q^{t-1}-\eta^{t-1}}{q-\eta})).
\end{equation}
Moreover, the norm of the difference between the predicted trajectory and the optimal trajectory at time $t$ is given by
\begin{align*}
    \left \|x_{t\mid t+W}-x_{t}^{*}\right \| \leq \frac{C^{2}C_{K}\|\bar{x}_{0}\|\gamma^W}{\gamma-1}\eta^{t-1}\gamma(\gamma^{t}-1).
\end{align*}
\end{lemma}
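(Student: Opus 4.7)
The plan is to exploit the fact that both $x_t^*$ and $x_{t\mid t+W}$ are closed-loop linear-feedback trajectories from the common initial state $\bar{x}_0$, so their difference (and the tracking error $x_t - x_{t\mid t+W}$) obeys a driven linear recursion that can be unrolled and bounded using Lemmas~\ref{thm:lemma2} and \ref{thm:lemma3}. I would prove the second inequality first, since its bound appears inside the bound on the first.

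For the prediction error $\|x_{t\mid t+W} - x_t^*\|$, I would write the standard one-step identity
\begin{equation*}
x_{i+1\mid t+W} - x_{i+1}^* = (A+BK_{i\mid t+W})\bigl(x_{i\mid t+W} - x_i^*\bigr) + B\bigl(K_{i\mid t+W} - K_i^*\bigr)\,x_i^*,
\end{equation*}
and unroll it from $i=0$, where both trajectories coincide at $\bar{x}_0$. Taking norms, I would apply Lemma~\ref{thm:lemma3} to the transition product $\prod_{j=i+1}^{t-1}(A+BK_{j\mid t+W})$ (contributing $C\eta^{t-1-i}$) and to $\|x_i^*\|\le C\eta^i\|\bar{x}_0\|$, and Lemma~\ref{thm:lemma2} with $t_0=T-1$ (so that $K_i^* = K_{i\mid T-1}$) to the gain difference (contributing $C_K\gamma^{t+W-i}$). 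Evaluating the geometric sum $\gamma^{t+W}\eta^{t-1}\sum_{i=0}^{t-1}\gamma^{-i}$ yields exactly $\tfrac{C^2C_K\|\bar{x}_0\|\gamma^W}{\gamma-1}\eta^{t-1}\gamma(\gamma^t-1)$.

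For the tracking error $\|x_t - x_t^*\|$, I would split it via the triangle inequality as $e_t + (x_{t\mid t+W} - x_t^*)$ with $e_t := x_t - x_{t\mid t+W}$, so that only $e_t$ requires a new analysis. Substituting the policy $u_t = K(x_t - x_{t\mid t+W}) + u_{t\mid t+W}$ into the dynamics and using $x_{t+1\mid t+W} = (A+BK_{t\mid t+W})x_{t\mid t+W}$, a direct calculation collapses all the gain-difference terms and yields $x_{t+1} - x_{t+1\mid t+W} = (A+BK)\,e_t$. Incorporating the prediction update at the next step,
\begin{equation*}
e_{t+1} = (A+BK)\,e_t + \bigl(x_{t+1\mid t+W} - x_{t+1\mid t+1+W}\bigr), \qquad e_0 = 0.
\end{equation*}
The forcing term is again a difference of two linear trajectories from $\bar{x}_0$ whose gain sequences now differ by $\|K_{i\mid t+W} - K_{i\mid t+1+W}\|\le C_K\gamma^{t+W-i}$, so the same unrolling technique used for the prediction error produces a bound of the form $\tfrac{C^2C_K\|B\|\|\bar{x}_0\|\gamma^W}{\gamma-1}\eta^{s}(\gamma^{s+1}-1)$. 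Unrolling $e_t = \sum_{s=0}^{t-1}(A+BK)^{t-1-s}(\cdot)$, applying $\|(A+BK)^n\|\le C_f q^n$, and splitting the resulting double sum into $\sum_{s} q^{t-1-s}(\eta\gamma)^s$ and $\sum_{s} q^{t-1-s}\eta^s$ produces exactly the two closed-form expressions $\tfrac{q^{t-1}-(\eta\gamma)^{t-1}}{q-\eta\gamma}$ and $\tfrac{q^{t-1}-\eta^{t-1}}{q-\eta}$ in the stated bound, with the prefactors $\tfrac{\eta\gamma}{q}$ and $\tfrac{\eta}{q}$ arising from the index shift $s+1\mapsto s$.

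The main obstacle is not conceptual but bookkeeping: carefully splitting the summand $\eta^s(\gamma^{s+1}-1)$ into two pieces so that, after convolution with the $q^{t-1-s}$ factor coming from $\|(A+BK)^{t-1-s}\|$, the two resulting geometric series give precisely the combination of fractions appearing in \eqref{eq:stateDifferenceBound}. A minor subtlety is confirming that every application of Lemma~\ref{thm:lemma2} respects the hypothesis $i\le t+W$, which holds automatically since the indices satisfy $i\le t-1$ and $W\ge 0$.
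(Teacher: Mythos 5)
Your proposal is correct and follows essentially the same route as the paper's proof: the same decomposition $x_t - x_t^* = (x_t - x_{t\mid t+W}) + (x_{t\mid t+W} - x_t^*)$, the same driven recursions for the prediction gap $\theta$ and the tracking error $e_t$ (the paper's $\omega_t$), the same use of Lemmas~\ref{thm:lemma2} and~\ref{thm:lemma3} on the unrolled products, and the same Gelfand-type bound $\|(A+BK)^n\|\le C_f q^n$ to evaluate the final convolution sums. The only differences are cosmetic (you prove the second inequality first and carry an explicit $\|B\|$ factor that the paper silently absorbs).
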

\begin{proof}\label{section:lemma4_proof}
    Observe the dynamics of $x_{t} - x_{t}^{*}$, we have
\begin{align*}
    x_{t}-x_{t}^{*}= x_{t} - x_{t\mid T-1}= x_{t} - x_{t\mid t+W} + x_{t\mid t+W} - x_{t\mid T-1}.
\end{align*}
%    &\implies x_{t} - x_{t\mid t-1} = (A+BK)(x_{t-1}-x_{t-1\mid t-1})
%Since $x_{t} - x_{t\mid t} = (A+BK)(x_{t-1}-x_{t-1\mid t-1})$, 
Define $\omega_{t} := x_{t}-x_{t\mid t+W}$, $\theta_{i\mid p,q} := x_{i\mid p}-x_{i\mid q}$, where $i \leq p\leq q\leq T$. Consequently, $\omega_{0} = 0$, and
\begin{align*}
    \omega_{t+1} 
    &= (A+BK)x_{t} + B(K_{t\mid t+W}-K)x_{t\mid t+W} \\
    &\quad- (A+BK_{t\mid t+W})x_{t\mid t+W} + x_{t+1\mid t+W} - x_{t+1\mid t+1+W}\\
    &= (A+BK)x_{t} + B(K_{t\mid t+W}-K)x_{t\mid t+W}\\
    &\quad- (A+BK+B(K_{t\mid t+W}-K))x_{t\mid t+W} + x_{t+1\mid t+W} - x_{t+1\mid t+W+1}\\
    &= (A+BK)(x_{t}-x_{t\mid t+W}) + x_{t+1\mid t+W} - x_{t+1\mid t+1+W}\\
    &= (A+BK)\omega_{t} + \theta_{t+1\mid t+W,t+1+W}\\
    %&= \sum_{i=1}^{t+1} \prod_{j=i}^{t}(A+BK)n_{i\mid i-1+W,i+W}\\
    &= \sum_{j=1}^{t+1}(A+BK)^{t+1-j}\theta_{j\mid j-1+W,j+W}.
\end{align*}
We now investigate the dynamics of $\theta_{i\mid p,q}$. Note that $\theta_{0\mid p,q} = 0$, and
\begin{align*}
    \theta_{i+1\mid p,q} &= x_{i+1\mid p} - x_{i+1\mid q}\\
    &= (A+BK_{i\mid p})x_{i\mid p} - (A+BK_{i\mid q})x_{i\mid q}\\
    &= (A+BK_{i\mid p})(\theta_{i\mid p,q}+x_{i\mid q}) - (A+BK_{i\mid q})x_{i\mid q}\\
    &= (A+BK_{i\mid p})\theta_{i\mid p,q}+B(K_{i\mid p}-K_{i\mid q})x_{i\mid q}.
\end{align*}
This implies that
\begin{align*}
    x_{i+1\mid p}-x_{i+1\mid q}
    &= \sum_{n=0}^{i}\bigg(\prod_{m=n+1}^{i}(A+BK_{m\mid p})\bigg)B(K_{n\mid p}-K_{n\mid q})\\
    &\qquad\quad\bigg(\prod_{m=0}^{n-1}(A+BK_{m\mid q})\bigg)\bar{x}_{0}.
\end{align*}
By Lemma \ref{thm:lemma3}, we can bound the product term by
\begin{align*}
    &\|\prod_{m=n+1}^{i}(A+BK_{m\mid p})\| \leq C\eta^{i-n},\\
    &\|\prod_{m=0}^{n-1}(A+BK_{m\mid q})\| \leq C\eta^{n}.
\end{align*}
By Lemma \ref{thm:lemma2}, we have
\begin{align*}
    \|B(K_{n\mid p} - K_{n\mid q})\| \leq C_{K}\gamma^{p-n}.
\end{align*}
Thus,
\begin{align*}
    \|\theta_{i+1\mid p,q}\| &= \|x_{i+1\mid p}-x_{i+1\mid q}\| \leq C^{2}C_{K}\sum_{t=0}^{i}\eta^{i}\gamma^{p-t}\\&=C^{2}C_{K}\gamma^{p}\eta^{i}\sum_{t=0}^{i}\frac{1}{\gamma^{t}}= \frac{C^{2}C_{K}\gamma^{p}\eta^{i}}{1-\frac{1}{\gamma}}(1-(\frac{1}{\gamma})^{i+1}).
\end{align*}
Choosing $i = t$, $p=t+W$, and $q=T-1$, results in
\begin{equation}\label{eq:estimateStateDifferenceBound}
\begin{split}
    \|\theta_{t\mid t+W,T-1}\|
    &\leq \frac{C^{2}C_{K}\|\bar{x}_{0}\|\gamma^{t+W}\eta^{t-1}}{1-(\frac{1}{\gamma})}(1-(\frac{1}{\gamma})^{t})\\
    &= \frac{C^{2}C_{K}\|\bar{x}_{0}\|\eta^{t-1}\gamma^{W+1}}{\gamma-1}(\gamma^{t}-1).
\end{split}
\end{equation}
Moreover,
\begin{align*}
    \|\theta_{i\mid i-1+W,i+W}\| &\leq \frac{C^{2}C_{K}\|\bar{x}_{0}\|\gamma^{i-1+W}\eta^{i-1}}{1-(\frac{1}{\gamma})}(1-(\frac{1}{\gamma})^{i})\\
    &= \frac{C^{2}C_{K}\|\bar{x}_{0}\|\eta^{i-1}\gamma^{W}}{\gamma-1}(\gamma^{i}-1).
\end{align*}
%Define $\mu_{i,t} = \|\prod_{j=i}^{t}(A+BK_{j})\|$. Conclude the above, we have
Define $\mu_{i,t} = \|(A+BK)^{t-i}\|$. Conclude the above, we have
\begin{equation}\label{eq:realEstStateBound}
    \begin{split}
        \|x_{t} - x_{t\mid t+W}\| &\leq \sum_{i=1}^{t}\|(A+BK)^{t-i}\theta_{i\mid i-1+W,i+W}\|\\
    &\leq \frac{C^2C_K\|\bar{x}_{0}\|\gamma^{W}}{\gamma-1}\sum_{i=1}^{t}\mu_{i,t}\eta^{i}(\gamma^i-1).
    \end{split}
\end{equation}
Thus, 
\begin{align}\label{eq:stateDifferenceBound}
    \|x_{t}-x_{t}^{*}\|
    &\leq \frac{C^2C_K\|\bar{x}_{0}\|\gamma^{W}}{\gamma-1}\bigg(\sum_{i=1}^{t}\mu_{i,t}\eta^{i}(\gamma^i-1)+ \eta^{t-1}\gamma(\gamma^{t}-1) \bigg).
\end{align}
Let $\rho(A+BK) < 1$, for any given $1-\rho(A+BK) > \varepsilon > 0$, by Gelfand's formula, there exist a $T' > 0$, such that for $n > T'$, we have
\begin{align*}
    | \|(A+BK)^{n}\|^{\frac{1}{n}} - \rho(A+BK)| < \varepsilon.
\end{align*}
Thus, $\|(A+BK)^{n}\|^{\frac{1}{n}} < \rho(A+BK) + \varepsilon$ and that implies that
\begin{align*}
    \|(A+BK)^{n}\| < (\rho(A+BK) + \varepsilon)^{n}.
\end{align*}
Define
\begin{align*}
    &C_f := \max_{n \geq 0} \frac{\|(A+BK)^{n}\|}{(q+\varepsilon)^n},\\
    &q := \rho(A+BK) + \varepsilon,
\end{align*}
we can conclude that, for every $r \geq s > 0$ and $\rho(A+BK)< 1$, there exist a pair of $C_f$ and $0 < q < 1$ such that
\begin{align*}
    \mu_{s,r} < \|(A+BK)^{r-s}\| < C_{f}q^{r-s}.
\end{align*}
Thus, the upper bound of the difference between the state vector and the optimal state vector is given by
\begin{align*}
    \|x_{t}-x_{t}^*\| &\leq \frac{C^2C_K\|\bar{x}_{0}\|\gamma^W}{\gamma-1}(\eta^{t-1}\gamma(\gamma^t-1)\\
    &+C_f(\frac{\eta\gamma}{q}(\frac{q^{t-1}-(\eta\gamma)^{t-1}}{q-\eta\gamma})-\frac{\eta}{q}(\frac{q^{t-1}-\eta^{t-1}}{q-\eta})).
\end{align*}
\end{proof}

\begin{lemma}[\cite{zhang_regret_2021}]\label{thm:lemma5}
The regret defined by \eqref{eq:regret} can be written as
\begin{align*}
    \text{Regret}_{T}(\{u_{t}\}_{t=0}^{T-2}) = \sum_{t=0}^{T-1} (u_{t}-\bar{u}_{t})^{\mathsf{T}}(R_{t}+B^{\mathsf{T}}P_{t+1}^{*}B)(u_{t}-\bar{u}_{t}),
\end{align*}
where $\bar{u}_{t} = K_{t}^{*}x_{t}$.
\end{lemma}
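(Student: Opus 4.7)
The plan is to prove this identity by a dynamic-programming value-function telescoping followed by a per-stage completion-of-squares. Let $V_t(x) := x^{\mathsf{T}} P_t^* x$ denote the optimal cost-to-go from state $x$ at time $t$, where $P_t^*$ is the Riccati iterate from Proposition~\ref{thm:prop1} (taking $t = T-1$ in that proposition's indexing). Standard LQR theory applied to the full-information problem \eqref{eq:contseq} gives $V_0(\bar{x}_0) = J_T(\{x_t^*\}_{t=0}^{T-1},\{u_t^*\}_{t=0}^{T-2})$, while the Riccati boundary condition gives $V_{T-1}(x) = x^{\mathsf{T}} Q_{T-1} x$.

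First I would write the telescope
$$ \sum_{t=0}^{T-2} \bigl[V_{t+1}(x_{t+1}) - V_t(x_t)\bigr] = V_{T-1}(x_{T-1}) - V_0(\bar{x}_0), $$
along the trajectory generated by the candidate control sequence $\{u_t\}$. Adding $J_T(\{x_t\},\{u_t\})$ to both sides, the terminal cost $x_{T-1}^{\mathsf{T}} Q_{T-1} x_{T-1}$ cancels $V_{T-1}(x_{T-1})$ while $V_0(\bar{x}_0)$ absorbs the optimal cost on the right-hand side of \eqref{eq:regret}, so that
$$ \text{Regret}_T(\{u_t\}_{t=0}^{T-2}) = \sum_{t=0}^{T-2} \bigl[x_t^{\mathsf{T}} Q_t x_t + u_t^{\mathsf{T}} R_t u_t + V_{t+1}(A x_t + B u_t) - V_t(x_t)\bigr]. $$

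Next I would expand the summand, using $x_{t+1} = A x_t + B u_t$, into the quadratic form $x_t^{\mathsf{T}}[A^{\mathsf{T}} P_{t+1}^* A + Q_t - P_t^*]x_t + 2 u_t^{\mathsf{T}} B^{\mathsf{T}} P_{t+1}^* A\, x_t + u_t^{\mathsf{T}}(R_t + B^{\mathsf{T}} P_{t+1}^* B)u_t,$ and reduce it using two identities from Proposition~\ref{thm:prop1}: the Riccati recursion yields $A^{\mathsf{T}} P_{t+1}^* A + Q_t - P_t^* = (K_t^*)^{\mathsf{T}}(R_t + B^{\mathsf{T}} P_{t+1}^* B) K_t^*$, and the definition of the optimal gain yields $B^{\mathsf{T}} P_{t+1}^* A = -(R_t + B^{\mathsf{T}} P_{t+1}^* B) K_t^*$. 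Substituting these and recognising $\bar{u}_t = K_t^* x_t$, the summand collapses to the completed square $(u_t - \bar{u}_t)^{\mathsf{T}}(R_t + B^{\mathsf{T}} P_{t+1}^* B)(u_t - \bar{u}_t)$, and summing over $t$ delivers the lemma.

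No substantive obstacle is anticipated: every step is Riccati algebra, and the identity is a textbook LQR result, which is why the lemma is attributed without reproof in \cite{zhang_regret_2021}. The only care needed is bookkeeping the telescope's boundary terms and noting that the stated upper index $T-1$ on the sum should be read as $T-2$, consistent with controls being defined only for $0 \leq t \leq T-2$.
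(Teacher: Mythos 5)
Your proof is correct: the paper itself gives no argument for this lemma (it is imported verbatim from the cited reference), and your value-function telescoping plus per-stage completion of squares is exactly the standard derivation underlying that result, with the Riccati identities $Q_t + A^{\mathsf{T}}P_{t+1}^*A - P_t^* = (K_t^*)^{\mathsf{T}}(R_t+B^{\mathsf{T}}P_{t+1}^*B)K_t^*$ and $B^{\mathsf{T}}P_{t+1}^*A = -(R_t+B^{\mathsf{T}}P_{t+1}^*B)K_t^*$ applied correctly. Your observation that the upper summation index should read $T-2$ rather than $T-1$ (controls are only defined up to $t=T-2$) is also right and flags a genuine typo in the statement as printed.
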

% The proof of Lemma \ref{thm:lemma2} and \ref{thm:lemma4} 
% can be found in the appendix, section \ref{section:lemma2 proof} to \ref{section:lemma4 proof}. Lemma \ref{thm:lemma5} plays a key role of determining the regret, which is demonstrated in the next section.
We also need the following elementary result.
\begin{lemma}\label{thm:lemmaineq}
    For any $a_1, a_2, a_3 \in \mathbb{R}$, we have that
\begin{align}\label{eq:sqineq}
    (a_1+a_2+a_3)^2 \leq \frac{10}{3}(a_1^{2}+a_{2}^{2}+a_3^2).
\end{align}
\end{lemma}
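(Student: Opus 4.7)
The plan is to reduce the claim to a standard quadratic inequality, which in fact admits a sharper constant than $\frac{10}{3}$, so the stated bound will follow immediately. First I would expand the left-hand side as
\begin{align*}
(a_1 + a_2 + a_3)^2 = a_1^2 + a_2^2 + a_3^2 + 2a_1 a_2 + 2a_1 a_3 + 2a_2 a_3.
\end{align*}
Then I would apply Young's inequality (equivalently, AM--GM) in the form $2a_i a_j \leq a_i^2 + a_j^2$ to each of the three cross terms. Summing the three bounds yields
\begin{align*}
(a_1+a_2+a_3)^2 \leq a_1^2 + a_2^2 + a_3^2 + (a_1^2 + a_2^2) + (a_1^2 + a_3^2) + (a_2^2 + a_3^2) = 3\,(a_1^2 + a_2^2 + a_3^2),
\end{align*}
which is strictly stronger than \eqref{eq:sqineq} since $3 \leq \frac{10}{3}$.

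An equivalent and even shorter route would be to apply the Cauchy--Schwarz inequality to the vectors $(a_1, a_2, a_3)$ and $(1,1,1)$ in $\mathbb{R}^3$, which produces $(a_1+a_2+a_3)^2 \leq 3(a_1^2 + a_2^2 + a_3^2)$ in a single line. In either presentation there is essentially no obstacle to overcome; the inequality is elementary and the constant $\frac{10}{3}$ is loose. My guess is that the authors chose $\frac{10}{3}$ rather than the tight $3$ purely to match the $\frac{10}{3}$ factors that already appear in the expression for the regret bound in Theorem~\ref{theorem:mainResult}, so that downstream applications of this lemma (to the three-term decomposition $x_t - x_t^* = \omega_t + \theta_{t\mid t+W, T-1}$ combined with the tracking error from Lemma~\ref{thm:lemma4}) slot in without changing notation.
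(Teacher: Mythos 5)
Your proof is correct, and it actually establishes the sharper constant $3$ in place of $\tfrac{10}{3}$: both the expansion-plus-AM--GM argument and the Cauchy--Schwarz argument with the vector $(1,1,1)$ are airtight, and $3 \leq \tfrac{10}{3}$ gives the stated inequality. The paper takes a different and strictly weaker route: it applies the two-term bound $(x+y)^2 \leq 2x^2 + 2y^2$ twice to get $(a_1+a_2+a_3)^2 \leq 4a_1^2 + 4a_2^2 + 2a_3^2$, writes down the two analogous inequalities obtained by permuting which variable receives the coefficient $2$, and averages the three to arrive at the symmetric constant $\tfrac{4+4+2}{3} = \tfrac{10}{3}$. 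So the $\tfrac{10}{3}$ is not a stylistic choice made to match the factors in Theorem~\ref{theorem:mainResult} (as you speculate); it is simply the best symmetric constant that the paper's particular chain of two-term estimates produces. Your approach buys the optimal constant (with equality at $a_1 = a_2 = a_3$) in one line; since the lemma is only ever invoked as an upper bound, either version suffices downstream, but your argument would tighten the leading factor $\tfrac{10}{3}$ to $3$ wherever the lemma is applied, e.g.\ in \eqref{eq:contUpperbound}.
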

\begin{proof}
    Note that,
$
    (a_1+a_2+a_3)^2 \leq 2(a_1+a_2)^{2} + 2a_3^2 \leq 4a_1^2 + 4a_2^2 + 2a_3^2$.
Similarly, $    (a_1+a_2+a_3)^2 \leq 4a_1^2 + 2a_2^2 + 4a_3^2$ and $
    (a_1+a_2+a_3)^2 \leq 2a_1^2 + 4a_2^2 + 4a_3^2.$
    
Combining all the above inequalities, yields
\begin{equation*}
    (a_1+a_2+a_3)^2 \leq \frac{4+4+2}{3}(a_1^{2}+a_{2}^{2}+a_3^2)= \frac{10}{3}(a_1^{2}+a_{2}^{2}+a_3^2).
\end{equation*}
\end{proof}
The proof of theorem is given below.
\paragraph{Proof of Theorem~\ref{theorem:mainResult}.}
    Note that%\imansays{wrong construction. What do you want to say?}
    \begin{equation}\label{eq:contUpperbound}
    \begin{aligned}
        \|u_{t}-\bar{u}_{t}\|^{2}
    &= \|Kx_{t} + (K_{t\mid t+W}-K)x_{t\mid t+W} - K_{t}^{*}x_{t}\|^{2}\\
    &= \|(K_{t\mid t+W}-K)(x_{t\mid t+W}-x_{t}^{*}) - (K_{t}^{*}-K)(x_{t}-x_{t}^{*}) + (K_{t}^{*}-K_{t\mid t+W})x_{t}^{*}\|^{2}\\
    &\leq \frac{10}{3}(\|(K_{t\mid t+W}-K\|^{2}\|x_{t\mid t+W}-x_{t}^{*}\|^{2} + \|K_{t}^{*}-K\|^{2}\|x_{t}-x_{t}^{*}\|^{2} + C_{K}^{2}\gamma^{2W}\|x_{t}^{*}\|^{2}).
    \end{aligned}
\end{equation}
The last inequality above is obtained from Lemma \ref{thm:lemmaineq}. By Lemma \ref{thm:lemma4}, we have that
\begin{equation}\label{eq:stateUpperbound1}
\begin{aligned}
    \sum_{t=0}^{T-1}\|x_{t}-x_{t}^*\|^2 
    &\leq \sum_{t=1}^{T-1} \bigg(\frac{C^2C_K\|\bar{x}_{0}\|\gamma^W}{\gamma-1}(\eta^{t-1}\gamma(\gamma^t-1)\\
    &\quad +C_f(\frac{\eta\gamma}{q}(\frac{q^{t-1}-(\eta\gamma)^{t-1}}{q-\eta\gamma})-\frac{\eta}{q}(\frac{q^{t-1}-\eta^{t-1}}{q-\eta}))\bigg)^2\\
    &\leq 2(\frac{C^2C_{K}\|\bar{x}_{0}\|\gamma^{W+1}}{(\gamma-1)})^2\bigg(\gamma^2S_{T}(\eta^2\gamma^2)-2\gamma S_{T}(\eta^2\gamma)\\
    &\quad +S_{T}(\eta^2))+\frac{10C_f^{2}}{3}((\frac{\eta\gamma}{q(q-\eta\gamma)}-\frac{\eta}{q(q-\eta)})^2S_{T}(q^2)\\
    &\quad +\frac{(\eta\gamma)^2S_{T}(\eta^2\gamma^2)}{q^2(q-\eta\gamma)^2}+\frac{\eta^2S_{T}(\eta^2)}{q^2(q-\eta)^2})\bigg),
\end{aligned}
\end{equation}
and
\begin{equation}\label{eq:stateUpperbound2}
\begin{split}
    \sum_{t=0}^{T-1}\|x_{t\mid t+W}-x_{t}^*\|^2 
    \leq (\frac{C^2C_{K}\|\bar{x}_{0}\|\gamma^{W+1}}{(\gamma-1)})^2\bigg(\gamma^2S_{T}(\eta^2\gamma^2)-2\gamma S_{T}(\eta^2\gamma)
    +S_{T}(\eta^2)\bigg).
\end{split}
\end{equation}
By Lemma \ref{thm:lemma3}, we have
\begin{equation}\label{eq:stateUpperbound3}
\begin{split}
    \sum_{t=0}^{T-1} \|x_{t}^{*}\|^{2} 
    = \sum_{t=0}^{T-1} \|\prod_{i=0}^{t}(A+BK_{i}^{*})\bar{x}_{0}\|^{2}
    \leq (C^{2}\|\bar{x}_{0}\|)^{2}S_{T}(\eta^{2}).
\end{split}
\end{equation}
Substitute \eqref{eq:stateUpperbound1}, \eqref{eq:stateUpperbound2} and \eqref{eq:stateUpperbound3} in \eqref{eq:contUpperbound}, by Lemma \ref{thm:lemma5}, the $\text{Regret}_{T}(\{u_{t}\}_{t=0}^{T-2})$ can be upper bounded by
%Combining the above, we can determine the regret bound by
\begin{align*}
    \text{Regret}_{T}(\{u_{t}\}_{t=0}^{T-2}) 
    &\leq \frac{10D}{3}\bigg(\sum_{t=1}^{T-1}(\frac{C^2C_{K}\|\bar{x}_{0}\|\gamma^{W+1}}{\eta(\gamma-1)}\eta^{t}(\gamma^{t}-1)\|K_{t\mid t+W}-K\|)^2 \\
    &\quad +\|K_{t}^{*}-K\|^2\|x_{t}-x_{t}^{*}\|^2 + (C_{K}C^2\|\bar{x}_{0}\|\gamma^W)^2S_{T}(\eta^2)\bigg)\\
    &\leq \frac{10D\gamma^{2W}\|\bar{x}_{0}\|^2}{3}\bigg[(\alpha_{1}+\alpha_{2})(\frac{C^2C_{K}\gamma}{(\gamma-1)})^2\bigg(\gamma^2S_{T}(\eta^2\gamma^2)-2\gamma S_{T}(\eta^2\gamma)\\
    &\quad +S_{T}(\eta^2))+\frac{10C_f^{2}}{3}((\frac{\eta\gamma}{q(q-\eta\gamma)}-\frac{\eta}{q(q-\eta)})^2S_{T}(q^2)\\
    &\quad +\frac{(\eta\gamma)^2S_{T}(\eta^2\gamma^2)}{q^2(q-\eta\gamma)^2}+\frac{\eta^2S_{T}(\eta^2)}{q^2(q-\eta)^2})\bigg)+(C_KC^2)^2S_{T}(\eta^2)\bigg].
\end{align*}
\hfill $\blacksquare$
\section{Proof of Proposition \ref{thm:sufficient}}\label{section:proofSufficient}
For the sake of clarity of presentation we drop the arguments of $F$ and we use $F'$ to represent the RHS of \cite[Theorem 1, Equation (15)]{zhang_regret_2021} in what follows. 
Note that
\begin{align*}
    F^{'}&\geq \frac{4\|\bar{x}_{0}\|^2D\|A\|^2\|B\|^2\lambda^{10}_{max}(P_{max})C^4\|BR^{-1}_{min}B^{\mathsf{T}}\|^2(1+\|BR^{-1}_{min}B^{\mathsf{T}}\|^2)(\gamma^W+\eta^W)^2}{\lambda^{2}_{min}(R_{min})\lambda^{4}_{min}(Q_{min})(1-\eta)^2}\\
    &\geq \frac{4\|\bar{x}_{0}\|^2D\|A\|^2\|B\|^2\lambda^{10}_{max}(Q_{max})C^4\|B\bar{R}^{-1}_{min}B^{\mathsf{T}}\|^2\gamma^{2W}}{\lambda^{2}_{min}(\bar{R}_{min})\lambda^{4}_{min}(\bar{Q}_{min})},
\end{align*}
and
\begin{align*}
    F
    &\leq \frac{10D\gamma^{2W}\|\bar{x}_{0}\|^2C^4C_{K}^2}{3}\bigg[ (\alpha_{1}+\alpha_{2})\frac{\gamma^2}{(1-\gamma)^2}\frac{1}{1-\eta^2}+\frac{\eta^2}{1-\eta^2} \\
    &\quad + \frac{10C_{f}^2}{3q^2}\bigg((\frac{(\gamma-1)q\eta}{(q-\eta\gamma)(q-\eta)})^2\frac{1}{1-q^2} + \frac{(\eta\gamma)^2}{(q-\eta\gamma)^2(1-\eta^2\gamma^2)} + \frac{\eta^2}{(q-\eta)^2(1-\eta^2)}\bigg)\bigg]\\
    &\leq \frac{10D\gamma^{2W}\|\bar{x}_{0}\|^2C^4C_{K}^2}{3} \bigg[(1+\frac{\alpha_{1}+\alpha_{2}}{(1-\gamma)^2})(\frac{1}{1-\eta^2})+\frac{10C_{f}^{2}}{q^{2}(q-\eta\gamma)^{2}(q-\eta)^{2}(1-\eta^{2})(1-\eta^{2}\gamma^{2})(1-q^2)}\bigg].
\end{align*}
Thus, if
\begin{align*}
    \lambda_{max}^{10}(Q_{max}) \geq \frac{5\bigg[(1+\frac{\alpha_{1}+\alpha_{2}}{(1-\gamma)^2})(\frac{1}{1-\eta^2})+\frac{10C_{f}^{2}}{q^{2}(q-\eta\gamma)^{2}(q-\eta)^{2}(1-\eta^{2})(1-\eta^{2}\gamma^{2})(1-q^2)}\bigg]}{6(C_{K}^{2}\lambda^{2}_{min}(\bar{R}_{min})\lambda^{4}_{min}(\bar{Q}_{min}))^{-1}\|A\|^2\|B\|^2\|B\bar{R}^{-1}_{min}B^{\mathsf{T}}\|^2},
\end{align*}
then it follows that $F\leq F'$. 
\hfill $\blacksquare$
\section{Proof of Theorem~\ref{theorem:disturbanceResult}}\label{sec:proof_disturbance}

Lemma \ref{thm:lemma5} holds regardless of the presence or the absence of the disturbances. Thus,
\begin{equation}\label{eq:expectedRegret}
    \begin{split}
        \text{ExpectedRegret}_{T}(\{u_{t}\}_{t=0}^{T-2}) &= \E(\sum_{t=0}^{T-1}(u_{t}-\bar{u}_{t})^{\mathsf{T}}(R_{t}+B^{\mathsf{T}}P_{t+1}^{*}B)(u_{t}-\bar{u}_{t}))\\
    &\leq D\sum_{t=0}^{T-1}\E((u_{t}-\bar{u}_{t})^{\mathsf{T}}(u_{t}-\bar{u}_{t}))= D\sum_{t=0}^{T-1}\E(\|u_{t}-\bar{u}_{t}\|^{2}).
    \end{split}
\end{equation}
The state variable $x_{n\mid q}$ can be expressed as
\begin{equation}\label{eq:disStateXNQ}
    x_{n\mid q} = \prod_{j=0}^{n-1}(A+BK_{j\mid q})\bar{x}_{0} + \sum_{r=0}^{n-1}\bigg(\prod_{j=r+1}^{n-1}(A+BK_{j\mid q})\bigg)w_{r}.
\end{equation}
With $\theta_{i\mid p,q} = x_{i\mid p}-x_{i\mid q}$, we have that
\begin{equation}\label{eq:disStateNPQ}
    \begin{split}
        \theta_{i+1\mid p,q} &= (A+BK_{i\mid p})\theta_{i\mid p,q} + B(K_{i\mid p}-K_{i\mid q})x_{i\mid q}\\
    &= \sum_{n=0}^{i}\bigg(\prod_{m=n+1}^{i} (A+BK_{m\mid p})\bigg)B(K_{n\mid p}-K_{n\mid q})x_{n\mid q}.
    \end{split}
\end{equation}
%\yitiansays{the n here overloads notation with indexes. WIll be fixed when the content of this section make sense}
Next, we state two lemmas to help us bounding the expected regret. 
%We write $f(x) \leq O(g(x))$, if and only if there exists a real constants $M,N>0$, such that for $x > N$, we have
%\begin{align*}
%    0 \leq f(x) \leq Mg(x).
%\end{align*}
%\imansays{You defined $g(x)$ to mean something else in Rem 7.}
\begin{lemma}\label{thm:disBound1}
    Consider the system in \eqref{eq:linsys} with initial condition $\bar{x}_{0} = 0$. Suppose that for any $t>0$, $\E(w_{t})=0$, $\E(w_{t}w_{t}^{\mathsf{T}})=W_{d}$ for a $W_{d} \in \mathbb{S}_{+}^{n}$. At any time $t$, disturbance sequence $\{w_{k}\}_{k=0}^{t}$ is available to the control policy \eqref{eq:policy}. Let $\{u_{t}\}_{t=0}^{T-2}$ denote the sequence that generate by control policy \eqref{eq:policy} subject to \eqref{eq:linsys}. There exist a positive scalar $C_{R2}$, the expected regret satisfies
    \begin{align}\label{eq:disExpReg}
        \text{ExpectedRegret}_{T}(\{u_{t}\}_{t=0}^{T-2}) \leq C_{R2}T\gamma^{2W}.
    \end{align}
\end{lemma}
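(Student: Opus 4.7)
The plan is to mirror the proof of Theorem~\ref{theorem:mainResult}, but exploit the zero-mean, i.i.d.\ structure of the disturbances to bound the \emph{expected} squared state deviations, and exploit $\bar{x}_0=0$ to eliminate the deterministic growth terms. The starting point is already laid out in \eqref{eq:expectedRegret}: by Lemma~\ref{thm:lemma5} and $R_t+B^{\mathsf T}P_{t+1}^*B\preceq D\,I$, it suffices to upper bound $\sum_{t=0}^{T-1}\E\left[\|u_t-\bar u_t\|^2\right]$ by a quantity of order $T\gamma^{2W}$.

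First, I would decompose $u_t-\bar u_t$ exactly as in \eqref{eq:contUpperbound}, namely into $(K_{t\mid t+W}-K)(x_{t\mid t+W}-x_t^*)$, $-(K_t^*-K)(x_t-x_t^*)$, and $(K_t^*-K_{t\mid t+W})x_t^*$, apply Lemma~\ref{thm:lemmaineq} to replace $\|\cdot\|^2$ by $\tfrac{10}{3}$ times the sum of the three squared norms, and pull the gain differences out with Lemma~\ref{thm:lemma2}. The gain differences $\|K_t^*-K_{t\mid t+W}\|$ are already $O(\gamma^W)$ uniformly in $t$, and $\alpha_1,\alpha_2$ give uniform bounds on the other two gain differences. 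So everything reduces to bounding three sums: $\sum_t\E\|x_{t\mid t+W}-x_t^*\|^2$, $\sum_t\E\|x_t-x_t^*\|^2$, and $\gamma^{2W}\sum_t\E\|x_t^*\|^2$.

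Second, I would use \eqref{eq:disStateXNQ} with $\bar x_0=0$ to write $x_{n\mid q}$ purely as a convolution $\sum_{r=0}^{n-1}\Phi_{r,n,q}w_r$ of the zero-mean i.i.d.\ disturbances, where $\Phi_{r,n,q}:=\prod_{j=r+1}^{n-1}(A+BK_{j\mid q})$. Since $\E[w_rw_s^{\mathsf T}]=\delta_{rs}W_d$, expanding $\E\|x_{n\mid q}\|^2=\sum_r\mathrm{tr}(\Phi_{r,n,q}W_d\Phi_{r,n,q}^{\mathsf T})$ kills all cross terms, and Lemma~\ref{thm:lemma3} gives $\|\Phi_{r,n,q}\|\le C\eta^{n-1-r}$, so the inner sum is a geometric series bounded by $C^2\mathrm{tr}(W_d)/(1-\eta^2)$ uniformly in $n$. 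The same treatment applied to the analogous expansion \eqref{eq:disStateNPQ} of $\theta_{i+1\mid p,q}$, combined with $\|B(K_{n\mid p}-K_{n\mid q})\|\le \|B\|C_K\gamma^{p-n}$ from Lemma~\ref{thm:lemma2}, will yield (after squaring, expanding, and using independence again to eliminate cross terms) a bound of the form $\E\|\theta_{i+1\mid p,q}\|^2\le C_\theta\gamma^{2p}$ that is uniform in $i$. Specialising $(p,q)$ to $(t+W,T-1)$ and to $(t-1+W,t+W)$ produces the $\gamma^{2W}$ factor, and the intermediate sum $x_t-x_{t\mid t+W}=\sum_{j=1}^t(A+BK)^{t-j}\theta_{j\mid j-1+W,j+W}$ is handled via Cauchy-Schwarz and the geometric decay $\|(A+BK)^{t-j}\|\le C_fq^{t-j}$.

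Finally, assembling the pieces, each of the three expectation sums has the form (uniform bound in $t$)$\,\times\,\gamma^{2W}$, so summation over $t$ adds at most a factor of $T$, producing $C_{R2}T\gamma^{2W}$ after absorbing all of $D$, $C$, $C_f$, $C_K$, $\alpha_1,\alpha_2$, $\mathrm{tr}(W_d)$, and the various $(1-\eta^2)^{-1}$, $(1-q^2)^{-1}$, $(\gamma-1)^{-2}$ factors into the constant $C_{R2}$. The main obstacle I anticipate is the careful bookkeeping of the double-sum expansion $\E\|\theta_{i+1\mid p,q}\|^2$: because both the gain-difference weights and the closed-loop transition matrices depend on the (random) disturbance history through $x_{n\mid q}$ only linearly, the independence argument does eliminate the cross terms, but verifying that no hidden $t$-dependence leaks into the uniform bound—so that the final summation produces $T$ rather than $T^2$—requires patient tracking of the geometric factors $\gamma^{p-n}$, $\eta^{i-n}$, and the independence structure of the $w_r$'s.
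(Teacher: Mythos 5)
Your proposal follows essentially the same route as the paper's proof: reduce via Lemma~\ref{thm:lemma5} and $D$ to bounding $\sum_t\E\|u_t-\bar u_t\|^2$, reuse the three-term decomposition \eqref{eq:contUpperbound}, represent the states as disturbance convolutions via \eqref{eq:disStateXNQ}--\eqref{eq:disStateNPQ} with $\bar x_0=0$, invoke Lemmas~\ref{thm:lemma2} and~\ref{thm:lemma3} for the geometric factors and the i.i.d.\ structure to control the expectations uniformly in $t$, and sum over $t$ to collect the factor $T\gamma^{2W}$. The only nit is the intermediate claim $\E\|\theta_{i+1\mid p,q}\|^2\le C_\theta\gamma^{2p}$, which should be of order $\gamma^{2(p-i)}$ (the sum $\sum_{n}\gamma^{-n}$ contributes a $\gamma^{-i}$), but this does not affect your conclusion since you correctly extract the $\gamma^{2W}$ factor when specialising $p=t+W$, $i=t-1$.
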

%\yitiansays{lemmas for finding the upperbound when system has disturbance}
\begin{proof}
    Substituting \eqref{eq:disStateXNQ} into \eqref{eq:disStateNPQ} with $\bar{x}_{0} = 0$ and $\theta_{i\mid p,q} := x_{i\mid p}-x_{i\mid q}$, we have that
    \begin{equation}\label{eq:expStateDiff1}
    \begin{split}
        \E(\|x_{t\mid t+W} - x_{t}^{*}\|^2) &= \E(\|\theta_{t\mid t+W,T-1}\|^2)\\
        &\leq \E(\|\sum_{n=0}^{t-1}\sum_{r=0}^{n-1}\bigg(\prod_{m=n+1}^{t-1}(A+BK_{m\mid t+W})\bigg)B(K_{n\mid t+W}-K_{n}^{*})\\
        &\qquad \bigg(\prod_{j=r+1}^{n-1}(A+BK_{j}^{*})\bigg)w_{r}\|^2)\\
        &\leq \frac{(C^2C_{K})^2\gamma^{2W}\eta^{2t}\gamma^{2t}}{\eta^{2}}\sum_{n_1=0}^{t-1}\sum_{n_2=0}^{t-1}\sum_{r_1=0}^{n_1-1}\sum_{r_2=0}^{n_2-1} \frac{\E(w_{r_1}w_{r_2}^{\mathsf{T}})}{\gamma^{n_1+n_2}\eta^{r_1+r_2}}
        := \kappa_{w\theta}
    \end{split}
    \end{equation}
    where $\kappa_{w\theta}=\gamma^{2W}(C_{\kappa_{w\theta}} + L_{\kappa_{w\theta}}(\eta^t,\gamma^t,\eta^{2t},\gamma^{2t}))$, $C_{\kappa_{w\theta}}$ is a non-negative scalar and $L_{\kappa_{w\theta}}(\eta^t,\gamma^t,\eta^{2t},\gamma^{2t})$ is a linear combination of $\eta^t,\gamma^t,\eta^{2t}$ and $\gamma^{2t}$.
    %{\color{red} FIX TRY EXPLAINING $\kappa_1$ is constant with respect to $t$ since the coefficient terms $\eta^{2t}\gamma^{2t}$ cancel with the terms in the summation}.
    Following the same steps as in the proof of Lemma \ref{thm:lemma4} in Appendix \ref{section:lemma4_proof}, we have
    \begin{equation}\label{eq:expStateDiff2}
    \begin{split}
        \E(\|x_{t} - x_{t}^{*}\|^2) &\leq \E(\|\sum_{i=1}^{t-1}\sum_{n=0}^{i-1}\sum_{r=0}^{n-1}(A+BK)^{t-i}\bigg(\prod_{m=n+1}^{i-1} (A+BK_{m\mid i-1+W})\bigg)\\
        &\qquad B(K_{n\mid i-1+W}-K_{n}^{*})\bigg(\prod_{j=r+1}^{n-1}(A+BK_{j}^{*})\bigg)w_{r}\|^2)\\
        &\leq \underbrace{\kappa_{w\theta} + 2(C^2C_{K})^2\eta^{2t}\sum_{i_1=1}^{t-1}\sum_{i_2=1}^{t-1}\sum_{n_1=0}^{t-1}\sum_{n_2=0}^{t-1}\sum_{r_1=0}^{n_1-1}\sum_{r_2=0}^{n_2-1} \frac{\E(w_{r_1}w_{r_2}^{\mathsf{T}}))\gamma^{i_1+i_2-n_1-n_2}}{\eta^{r_1+r_2}}}_{\kappa_{wx}:= }
    \end{split}
    \end{equation}
    and
    \begin{equation}\label{eq:expStateOptimal}
    \begin{split}
        \E(\|x_{t}^{*}\|^2) &\leq \E(\|\sum_{r=0}^{t-1}\bigg(\prod_{j=r+1}^{t-1}(A+BK_{j}^{*})\bigg)w_{r}\|^2)\\
        &\leq C^4\sum_{r_1=0}^{t-1}\sum_{r_2=0}^{t-1} \E(w_{r_1}w_{r_2}^{\mathsf{T}})\eta^{2t-r_1-1}\eta^{2t-r_2-1} := \kappa_{wx^*}
    \end{split}
    \end{equation}
    where again $\kappa_{wx}$ and $\kappa_{wx^*}$ are of the form $\gamma^{2W}(C_{\kappa_{wx}} + L_{\kappa_{wx}}(\eta^t,\gamma^t,\eta^{2t},\gamma^{2t}))$ and $C_{\kappa_{wx^*}} + L_{\kappa_{wx^*}}(\eta^t,\gamma^t,\eta^{2t},\gamma^{2t})$, respectively, where $C_{\kappa_{wx}}$, $C_{\kappa_{wx^*}}$ are constants, $L_{\kappa_{wx}}(\eta^t,\gamma^t,\eta^{2t},\gamma^{2t})$ and $L_{\kappa_{wx^*}}(\eta^t,\gamma^t,\eta^{2t},\gamma^{2t})$ are linear combinations of $\eta^t,\gamma^t,\eta^{2t}$ and $\gamma^{2t}$.
    
    %{\color{brown} there are non-zero constant term in the summations in \eqref{eq:expStateDiff2} and \eqref{eq:expStateOptimal}.}
    %{\color{brown} $\kappa_{wx}$ and $\kappa_{wx^{*}}$ are polynomials that associated with $\eta$ and $\gamma$, where the lowest degree term is a non-zero constant.} 
    
    %{\color{red} In \eqref{eq:expStateDiff2}, set $i_{1},i_{2},n_{1},n_{2},r_{1},r_{2} = t-1$, the term $2(C^2C_{K})^2Tr(Wd)$ will  FIX TRY EXPLAINING the summations in \eqref{eq:expStateDiff2} and \eqref{eq:expStateOptimal} are constant with respect to $t$ since the coefficient terms involving $t$ cancel with the terms in the summations}. 
    Recall $\alpha_1$, $\alpha_2$ and $D$ defined in Theorem \ref{theorem:mainResult}. Similar to the proof of Theorem \ref{theorem:mainResult}, there exist a positive scalar $C_{R2}$ that the expected regret satisfies
    \begin{equation}\label{eq:expRegIneq}
        \begin{split}
            \text{ExpectedRegret}_{T}(\{u_{t}\}_{t=0}^{T-2}) &\leq \frac{10D}{3}\sum_{t=0}^{T-1}(\alpha_{1}\E(\|x_{t\mid t+W} - x_{t}^{*}\|^2)+\alpha_{2}\E(\|x_{t} - x_{t}^{*}\|^2)\\
            &\quad +C^{2}_{K}\gamma^{2W}\E(\|x_{t}^{*}\|^2))\\
            &\leq \frac{10D}{3}\sum_{t=0}^{T-1}(\alpha_{1}\kappa_{w\theta} + \alpha_{2}\kappa_{wx} + C_{K}^{2}\gamma^{2W}\kappa_{wx^{*}})\\
            &\leq C_{R2}T\gamma^{2W}.
        \end{split}
    \end{equation}
    where the last inequality holds by substituting \eqref{eq:expStateDiff1}, \eqref{eq:expStateDiff2}, and \eqref{eq:expStateOptimal} into \eqref{eq:expectedRegret}.

        % The $C_{\kappa_{w\theta}}$ term leads to $C_{\kappa_{w\theta}}T$, while $F_{\kappa_{w\theta}}(\eta^t,\gamma^t,\eta^{2t},\gamma^{2t})$ leads to another constant, after taking summation from $t=0$ to $T-1$. Similar to $C_{\kappa_{wx}}$, $C_{\kappa_{wx^*}}$ and $L_{\kappa_{wx}}(\eta^t,\gamma^t,\eta^{2t},\gamma^{2t})$, $L_{\kappa_{wx^*}}(\eta^t,\gamma^t,\eta^{2t},\gamma^{2t})$. Thus, after taking summation from $t=0$ to $T-1$, there exist a scalar $C_{R2}$ such that
    % {\begin{align*}
    %     \frac{10D}{3}\sum_{t=0}^{T-1}(\alpha_{1}\kappa_{w\theta} + \alpha_{2}\kappa_{wx} + C_{K}^{2}\gamma^{2W}\kappa_{wx^{*}}) \leq C_{R2}T\gamma^{2W}.
    % \end{align*}}

\end{proof}
\begin{lemma}\label{thm:disBound2}
    For any $\bar{x}_{0}\in R^{n}$, consider the system define in \eqref{eq:linsys} has initial condition of $\bar{x}_{0}$. Suppose that for any $t>0$, $\E(w_{t})=0$, $\E(w_{t}w_{t}^{\mathsf{T}})=W_{d}$ for a $W_{d} \in \mathbb{S}_{+}^{n}$. At any time $t$, disturbance sequence $\{w_{k}\}_{k=0}^{t}$ is available to the decision maker. Let $\{u_{t}\}_{t=0}^{T-2}$ denote the sequence that generate by control policy \eqref{eq:policy} under the constrain of \eqref{eq:linsys}. We have
    \begin{align*}
        \text{ExpectedRegret}_{T}(\{u_{t}\}_{t=0}^{T-2}) \leq \text{RHS of inequality \eqref{eq:regUpperbound}} + \text{RHS of equality \eqref{eq:disExpReg}}.
    \end{align*}
\end{lemma}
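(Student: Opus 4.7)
The plan is to exploit linearity of the dynamics and policy to write a superposition decomposition of every signal into a deterministic component driven by $\bar{x}_0$ with no disturbances and a stochastic component driven by the disturbances with zero initial state, and then to use the zero-mean property $\E[w_t]=0$ to kill the resulting cross-term in expectation. The first key observation is that the Riccati iterates $P_t^{*}$ and $P_{t\mid t+W}$ of Proposition~\ref{thm:prop1} are functions only of $(A,B)$ and the (deterministic) cost matrices, hence the gains $K_t^{*}$, $K_{t\mid t+W}$, the tracking gain $K$, and the weight $R_t + B^{\mathsf{T}}P_{t+1}^{*}B$ appearing in Lemma~\ref{thm:lemma5} are all deterministic, irrespective of whether disturbances are present.

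With this in hand I would split every trajectory into two components by superposition. Let $(x_t^{(0)}, u_t^{(0)}, x_{t\mid t+W}^{(0)}, u_{t\mid t+W}^{(0)}, x_t^{*,(0)})$ denote the closed-loop signals produced by the policy \eqref{eq:policy} applied to the disturbance-free system with initial state $\bar{x}_0$ and predictions computed via \eqref{eq:seq}, and let $(x_t^{(w)}, u_t^{(w)}, x_{t\mid t+W}^{(w)}, u_{t\mid t+W}^{(w)}, x_t^{*,(w)})$ denote the corresponding signals for zero initial state and disturbance sequence $\{w_s\}$. Because \eqref{eq:linsys}, the prediction problem \eqref{eq:seq_w}, and the full-information LQR \eqref{eq:contseq} are linear in $(\bar{x}_0,\{w_s\})$ once the gains are fixed, the feedback law \eqref{eq:policy} preserves this linearity, so $x_t = x_t^{(0)} + x_t^{(w)}$, $u_t = u_t^{(0)} + u_t^{(w)}$, $x_t^{*} = x_t^{*,(0)} + x_t^{*,(w)}$, and $\bar{u}_t := K_t^{*}x_t = K_t^{*}x_t^{(0)} + K_t^{*}x_t^{(w)}$.

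I would then invoke Lemma~\ref{thm:lemma5} (which, as an algebraic identity, applies for any admissible trajectory) to rewrite
\[
\text{ExpectedRegret}_T(\{u_t\}_{t=0}^{T-2}) = \sum_{t=0}^{T-1}\E\bigl[(u_t-\bar{u}_t)^{\mathsf{T}}(R_t+B^{\mathsf{T}}P_{t+1}^{*}B)(u_t-\bar{u}_t)\bigr],
\]
and insert the decomposition $u_t-\bar{u}_t = \delta_t^{(0)} + \delta_t^{(w)}$, where $\delta_t^{(0)} := u_t^{(0)} - K_t^{*}x_t^{(0)}$ is deterministic and $\delta_t^{(w)} := u_t^{(w)} - K_t^{*}x_t^{(w)}$ is a linear function of the zero-mean vectors $\{w_s\}_{s<t}$. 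Expanding the quadratic form yields three summands; the cross-term $2(\delta_t^{(0)})^{\mathsf{T}}(R_t+B^{\mathsf{T}}P_{t+1}^{*}B)\,\E[\delta_t^{(w)}]$ vanishes because $\E[\delta_t^{(w)}]=0$. The two surviving terms are, respectively, the disturbance-free regret of the initial-condition-$\bar{x}_0$ problem, bounded by the RHS of \eqref{eq:regUpperbound} via Theorem~\ref{theorem:mainResult}, and the expected regret of the zero-initial-state disturbed problem, bounded by the RHS of \eqref{eq:disExpReg} via Lemma~\ref{thm:disBound1}; summing these gives the claim.

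The main obstacle I expect is justifying cleanly that superposition really passes through the closed-loop tracking law \eqref{eq:policy}, since $x_t$ is fed back through $K(x_t - x_{t\mid t+W})$ and might superficially appear to prevent a linear decomposition. The resolution is the earlier observation that $K$, $K_{t\mid t+W}$, and $K_t^{*}$ are prescribed a priori and do not depend on the realization of $\bar{x}_0$ or $\{w_s\}$, so the map $(\bar{x}_0,\{w_s\})\mapsto(x_t,u_t,x_{t\mid t+W},u_{t\mid t+W})$ is genuinely linear; once this is secured, the remaining argument is the one-line cross-term cancellation followed by direct invocation of Theorem~\ref{theorem:mainResult} and Lemma~\ref{thm:disBound1}.
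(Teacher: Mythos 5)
Your proposal is correct and rests on the same two pillars as the paper's proof: linearity of all closed-loop maps in $(\bar{x}_{0},\{w_{s}\})$ once the (deterministic) gains $K$, $K_{t\mid t+W}$, $K_{t}^{*}$ are fixed, and the cancellation of the deterministic--stochastic cross term under $\E(w_{t})=0$. The difference is where you perform the split. You decompose at the level of the control deviation $u_{t}-\bar{u}_{t}=\delta_{t}^{(0)}+\delta_{t}^{(w)}$ inside the exact quadratic-form identity of Lemma \ref{thm:lemma5}, which gives an exact additive decomposition of the expected regret into (i) the disturbance-free regret with initial state $\bar{x}_{0}$ and (ii) the expected regret of the zero-initial-state disturbed problem; you then cite Theorem \ref{theorem:mainResult} and Lemma \ref{thm:disBound1} as black boxes. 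The paper instead first passes through the $\tfrac{10}{3}$ relaxation \eqref{eq:expRegIneq} and then splits each of the three squared state-error norms $\E\|x_{t\mid t+W}-x_{t}^{*}\|^{2}$, $\E\|x_{t}-x_{t}^{*}\|^{2}$, $\E\|x_{t}^{*}\|^{2}$ into a $\kappa_{\theta}$-type deterministic part and a $\kappa_{w\theta}$-type stochastic part, re-deriving the bounds term by term via \eqref{eq:disStateXNQ}--\eqref{eq:disStateNPQ}. Your route is cleaner and marginally tighter, since the cross-term cancellation is applied to the exact regret rather than after the norm relaxation, and it avoids repeating the per-term estimates; its only additional burden is the one you already flag, namely justifying that the solution map of \eqref{eq:seq_w} is linear in $(\bar{x}_{0},\{w_{s}\})$ (true for a strictly convex quadratic program with purely quadratic objective and affine constraints, and consistent with the paper's expression \eqref{eq:disStateXNQ}), so that the zero-disturbance component of the prediction coincides with the solution of \eqref{eq:seq} and Theorem \ref{theorem:mainResult} applies verbatim to the deterministic component.
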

\begin{proof}
    Let $\kappa_{\theta}$, $\kappa_{\omega}$, $\kappa_{x}$, $\kappa_{x^*}$, and $F_{exp}$ denote the RHS of \eqref{eq:estimateStateDifferenceBound}, \eqref{eq:realEstStateBound}, \eqref{eq:expStateDiff2}, \eqref{eq:stateUpperbound3} and \eqref{eq:disExpReg}, respectively. 
    Note that
    \begin{equation}\label{eq:disStateDiff1}
    \begin{split}
        \E(\|x_{t\mid t+W} - x_{t}^{*}\|^2) &= \E(\|\theta_{t\mid t+W,T-1}\|^2)\\
        &\leq \E(\|\sum_{n=0}^{t-1}\bigg(\prod_{m=n+1}^{t-1}(A+BK_{m\mid t+W})\bigg)B(K_{n\mid t+W}-K_{n}^{*})x_{n}^{*}\|^2)\\
        &\leq \E(\|\sum_{n=0}^{t-1}\bigg(\prod_{m=n+1}^{t-1}(A+BK_{m\mid t+W})\bigg)B(K_{n\mid t+W}-K_{n}^{*})\\
        &\quad\bigg[\prod_{j=0}^{n-1}(A+BK_{j}^{*})\bar{x}_{0} + \sum_{r=0}^{n-1}\bigg(\prod_{j=r+1}^{n-1}(A+BK_{j}^{*})\bigg)w_{r}\bigg]\|^2)\\
        &\leq \|\sum_{n=0}^{t-1}\bigg(\prod_{m=n+1}^{t-1}(A+BK_{m\mid t+W})\bigg)B(K_{n\mid t+W}-K_{n}^{*})\prod_{j=0}^{n-1}(A+BK_{j}^{*})\bar{x}_{0}\|^2 \\
        &\quad + \E(\|\sum_{n=0}^{t-1}\sum_{r=0}^{n-1}\bigg(\prod_{m=n+1}^{t-1}(A+BK_{m\mid t+W})\bigg)\\
        &\qquad B(K_{n\mid t+W}-K_{n}^{*})\bigg(\prod_{j=r+1}^{n-1}(A+BK_{j}^{*})\bigg)w_{r}\|^2)\\
        &\leq \kappa_{\theta}+\kappa_{w\theta}.
    \end{split}
    \end{equation}
    Similarly,
    \begin{equation}\label{eq:disStateDiff2}
    \begin{split}
        \E(\|x_{t}-x_{t}^{*}\|^2) &\leq 2\E(\|x_{t}-x_{t\mid t+W}\|^2+\|x_{t\mid t+W}-x_{t}^{*}\|^2)\\
        &\leq 2(\kappa_{w\theta}+\kappa_{\theta}) + 2\E(\|\sum_{i=1}^{t-1}(A+BK)^{t-i}\theta_{i\mid i-1+W,i+W}\|^2)\\
        &\leq 2\E(\sum_{i=1}^{t-1}\sum_{n=0}^{i-1}(A+BK)^{t-i}\bigg(\prod_{m=n+1}^{i-1} (A+BK_{m\mid i-1+W})\bigg)\\
        &\qquad B(K_{n\mid i-1+W}-K_{n}^{*})x_{n}^{*}) + 2(\kappa_{w\theta}+\kappa_{\theta})\\
        &\leq 2\E(\|\sum_{i=1}^{t-1}\sum_{n=0}^{i-1}(A+BK)^{t-i}\bigg(\prod_{m=n+1}^{i-1} (A+BK_{m\mid i-1+W})\bigg)B(K_{n\mid i-1+W}-K_{n}^{*})\\
        &\qquad \prod_{j=0}^{n-1}(A+BK_{j}^{*})\bar{x}_{0} + \sum_{r=0}^{n-1}\bigg(\prod_{j=r+1}^{n-1}(A+BK_{j}^{*})\bigg)w_{r}\|^2)+ 2(\kappa_{w\theta}+\kappa_{\theta})\\
        &\leq 2(\kappa_{w\theta}+\kappa_{\theta}) + 2(\kappa_{\omega}\\
        &\quad + \E(\|\sum_{i=1}^{t-1}\sum_{n=0}^{i-1}\sum_{r=0}^{n-1}(A+BK)^{t-i}\bigg(\prod_{m=n+1}^{i-1} (A+BK_{m\mid i-1+W})\bigg)\\
        &\quad B(K_{n\mid i-1+W}-K_{n}^{*})\bigg(\prod_{j=r+1}^{n-1}(A+BK_{j}^{*})\bigg)w_{r}\|^2)\\
        &\leq 2(\kappa_{w\theta}+\kappa_{\theta}) + 2(\kappa_{\omega} \\
        &\quad + (C^2C_{K})^2\eta^{2t}\sum_{i_1=1}^{t-1}\sum_{i_2=1}^{t-1}\sum_{n_1=0}^{t-1}\sum_{n_2=0}^{t-1}\sum_{r_1=0}^{n_1-1}\sum_{r_2=0}^{n_2-1} \frac{\E(w_{r_1}w_{r_2}^{\mathsf{T}}))\gamma^{i_1+i_2-n_1-n_2}}{\eta^{r_1+r_2}})\\
        &\leq  \kappa_{x} + \kappa_{wx},
    \end{split}
    \end{equation}
    the second step to the third step of inequality is by substituting \eqref{eq:disStateXNQ} in \eqref{eq:disStateNPQ}.
    Moreover,
    \begin{equation}\label{eq:disStateDiff3}
    \begin{split}
        \E(\|x_{t}^{*}\|^2) &\leq \E(\|\prod_{j=0}^{t-1}(A+BK_{j}^{*})\bar{x}_{0} + \sum_{r=0}^{t-1}\bigg(\prod_{j=r+1}^{t-1}(A+BK_{j}^{*})\bigg)w_{r} \|^2)\\
        &\leq \|\prod_{j=0}^{t-1}(A+BK_{j}^{*})\bar{x}_{0}\|^2 + \E(\|\sum_{r=0}^{t-1}\bigg(\prod_{j=r+1}^{t-1}(A+BK_{j}^{*})\bigg)w_{r}\|^2)\\
        &\leq  \kappa_{x^*}+\kappa_{wx^*}.
    \end{split}
    \end{equation}
    Substituting \eqref{eq:disStateDiff1}, \eqref{eq:disStateDiff2} and \eqref{eq:disStateDiff3} into \eqref{eq:expectedRegret}, we have
    \begin{align*}
        \text{ExpectedRegret}_{T}(\{u_{t}\}_{t=0}^{T-2}) &\leq \frac{10D}{3}\sum_{t=0}^{T-1}(\alpha_{1}\E(\|x_{t\mid t+W} - x_{t}^{*}\|^2)\\
        &+\alpha_{2}\E(\|x_{t} - x_{t}^{*}\|^2)+C^{2}_{K}\gamma^{2W}\E(\|x_{t}^{*}\|^2))\\
        &\leq \sum_{t=0}^{T-1}  \frac{10D}{3}\alpha_1(\kappa_{1}+\kappa_{\theta}) + \alpha_2(\kappa_{x}+\kappa_{wx}) \\
        &\quad + C_{K}^{2}\gamma^{2W}(\kappa_{wx^*}+\kappa_{x^*})\\
        &\leq F + F_{exp}.
    \end{align*}
    %\yitiansays{haven't finish the writing yet, but if this style make sence, i will write the rest of the proof afterwards}
    %Suppose $\bar{u}_{t}$ is defined in Lemma \ref{thm:lemma5}. Observe $\E(\|u_{t}-\bar{u}_{t}\|^2)$
\end{proof}
Now, we are ready to proof Theorem \ref{theorem:disturbanceResult}.
\begin{proof}
Based on the expression of $F$ and $F_{exp}$ from Theorem \ref{theorem:mainResult} and Lemma \ref{thm:disBound1}, there exist positive scalars $C_{R1}$ and $C_{R2}$, such that $F \leq \gamma^{2W}C_{R1}$ and $F_{exp} \leq \gamma^{2W}TC_{R2}$ for $T \geq 2$ and $0 \leq W \leq T-2$. By Lemma \ref{thm:disBound2}, for the control sequence $\{u_{t}\}_{t=0}^{T-2}$ generated by control policy \eqref{eq:policy}, we have
\begin{align*}
     \text{ExpectedRegret}_{T}(\{u_{t}\}_{t=0}^{T-2}) &\leq F+F_{exp}\\
    &\leq \gamma^{2W}(C_{R1}+C_{R2}T).
\end{align*}
Let $C_{ER} = C_{R1} + C_{R2}$, we have
\begin{align*}
    \text{ExpectedRegret}_{T}(\{u_{t}\}_{t=0}^{T-2}) \leq \gamma^{2W}C_{ER}T.
\end{align*}
\end{proof}
\end{document}